\documentclass[11pt]{article}  

\usepackage{amsmath, amsthm, amsfonts, amssymb}
\usepackage[colorlinks=true,linkcolor=red!50!black,citecolor=green!50!black,urlcolor=blue!50!black]{hyperref}
\usepackage{indentfirst} 
\usepackage{marvosym}
\usepackage{enumitem}
\usepackage[a4paper, margin=2.4cm]{geometry}
\usepackage{xcolor}
\usepackage{microtype}

\newtheorem{lemma}{Lemma}[section]
\newtheorem{theorem}[lemma]{Theorem}
\newtheorem{proposition}[lemma]{Proposition}

\renewenvironment{proof}[1][\proofname]{{\noindent\bf #1. }}{\qed}
\newtheorem{theoremletters}{Theorem}

\theoremstyle{definition}
\newtheorem{example}[lemma]{Example}

\newcommand{\op}{\operatorname}
\newcommand{\ce}[2]{\pmb{\op{C}}_{#1}(#2)}

\newcommand{\ze}[1]{\pmb{\op{Z}}(#1)}

\newcommand{\rad}[2]{\pmb{\op{O}}_{#1}(#2)}
\newcommand{\syl}[2]{\op{Syl}_{#1}\left(#2\right)}
\newcommand{\hall}[2]{\op{Hall}_{#1}\left(#2\right)}

\begin{document}

\title{\bf The number of composite class sizes in finite groups}

\author{\sc Vittorio Bagnara\thanks{Dipartimento di Matematica, Università di Salerno, Via Giovanni Paolo II, 132-84084 Fisciano, Italy. \newline \Letter: \texttt{vbagnara@unisa.it} \newline ORCID: 0000-0004-5670-9067}\; and Víctor Sotomayor\thanks{Departamento de Álgebra, Universidad de Granada, Avda. de Fuentenueva s/n, 18071 Granada, Spain. \newline
\Letter: \texttt{vsotomayor@ugr.es} \newline 
ORCID: 0000-0001-8649-5742 \newline \rule{6cm}{0.1mm}\newline
This research has been carried out during a visit of the first author to Departamento de Álgebra of Universidad de Granada, with the financial support of the Erasmus+ Traineeship Programme of the European Union. He wishes to acknowledge all the members of the department for the hospitality. \newline
}}

\date{\textit{\small In honor of Alan Camina on his 85th birthday}}

\maketitle

\begin{abstract}
\noindent We consider finite groups with at least three conjugacy class sizes that are composite numbers and we prove that, in that situation, the number of prime class sizes is bounded by the number of composite class sizes. The analogous result for (non-)prime-power class sizes is also addressed.

\medskip

\noindent \textbf{Keywords.} Finite groups $\cdot$ Conjugacy classes $\cdot$ Composite numbers

\smallskip

\noindent \textbf{2020 MSC.} 20E45
\end{abstract}


\section{Introduction}

Throughout this paper, all groups considered are assumed to be finite. The study of arithmetical properties of the set $cs(G)$ of class sizes of a group $G$ is a well-established research theme, since it can reveal information regarding the algebraic structure of $G$. Recent research has focused on groups in which the presence of composite numbers in $cs(G)$ is scarce --we recall that a positive integer is composite if it has at least one divisor different from 1 and itself--.  More concretely, Jiang \emph{et al.} analysed in \cite{JSZ} arithmetical and structural features of $G$ when $cs(G)$ contains exactly one composite integer, whilst Monetta and Sotomayor in \cite{MS} placed particular emphasis on the case where $cs(G)$ possesses exactly two composite numbers. Additionally, the structure of $G$ in the extreme situation where $cs(G)$ has no composite integer can be characterised from \cite{CH}, where Chillag and Herzog addressed the equivalent condition where every number in $cs(G)$ is 1 or prime.

Let $cs_c(G)$ be the subset of composite class sizes of $G$, and let $cs_p(G)$ be the subset of prime class sizes of $G$, so certainly $cs(G)=\{1\}\cup cs_p(G)\cup cs_c(G)$. By the main results in the aforementioned papers, the next conclusions particularly follow:

\begin{itemize}
\setlength{\itemsep}{0mm}
\item[(1)] If $cs_c(G)=\emptyset$, then $|cs_p(G)|\leq 2$, and this bound is sharp (see \cite[Corollary 2.3]{CH}).
\item[(2)] If $|cs_c(G)|=1$, then $|cs_p(G)|\leq 2$, and this bound is sharp (see \cite[Theorem A]{JSZ}).
\item[(3)] If $|cs_c(G)|=2$, then $|cs_p(G)|\leq 3$, and this bound is sharp (see \cite[Theorem A]{MS}).
\end{itemize}

Motivated by this development, in this note we consider a broader framework and investigate a bound for $|cs_p(G)|$ when the group $G$ has at least three composite class sizes, that is, when $|cs_c(G)|\geq 3$. Our main result shows that the previous arithmetical pattern changes:

\begin{theoremletters}
\label{theoA}
Let $G$ be a group and $n\geq 3$ be an integer. If $G$ has $n$ composite class sizes, then the number of prime class sizes of $G$ is at most $n$.
\end{theoremletters}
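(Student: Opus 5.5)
The plan is to bound the prime class sizes by elementary arithmetic on centralizers and then compare with $n$. Let $p_1<\dots<p_k$ be the distinct primes with $p_i\in cs_p(G)$, and choose $x_i\in G$ with $|x_i^G|=p_i$. If $k\le 3\le n$ there is nothing to prove, so assume $k\ge 4$. The goal is to manufacture at least $k$ distinct composite class sizes from the $x_i$ and their products.

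The basic tool is this observation. If $|x^G|=p$ and $|y^G|=q$ with $p\neq q$ primes, then $|G:\ce{G}{x}\cap\ce{G}{y}|=pq$. In particular $x$ and $y$ commute, since $\ce{G}{x}$ and $\ce{G}{y}$ have coprime indices and so $G=\ce{G}{x}\ce{G}{y}$; one then checks $y\in\ce{G}{x}$ using that $\ce{G}{x}$ has index $p$ and $y^G$ has $q$ elements. Consequently $\ce{G}{xy}\supseteq \ce{G}{x}\cap \ce{G}{y}$, so $|(xy)^G|$ divides $pq$. I will try to show that for suitable choices, namely replacing $x$ or $y$ by a power of itself of coprime order, or taking $x$ and $y$ of coprime prime-power orders via the primary decomposition (the class sizes of the $p$-part and $p'$-part both divide $|x^G|$, and one of them equals it), we get $\ce{G}{xy}=\ce{G}{x}\cap\ce{G}{y}$. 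Thus one can assume $x_i$ has prime-power order, and elements with different class sizes $p_i\ne p_j$ of coprime orders give $|(x_ix_j)^G|=p_ip_j$. The delicate case is when all $x_i$ are forced to be $r$-elements for the same prime $r$. There I would use a Chillag–Herzog-type argument: elements of prime class size $p$ lie in a specific structure, for instance $x^G\subseteq x[x,G]$ with $[x,G]$ normal, to still get $|(x_ix_j)^G|=p_ip_j$ for at least a "path" of pairs.

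Granting this, the products $p_ip_{i+1}$ for $1\le i\le k-1$ together with, say, $p_1p_3$ give $k$ distinct composites when $k\ge 3$. If only a chain is available, further products $x_1x_2x_3$ of class size $p_1p_2p_3$ supply extra composite sizes. So $n\ge k$ follows once the products are realised, and this does not even need the hypothesis $n\ge3$ beyond the small cases handled in (1)–(3). That redundancy makes me suspect the real difficulty lies elsewhere: $cs_p(G)$ can be large only in special configurations. For example, one expects that if $|cs_p(G)|\ge 3$ then $G$ is nilpotent-like or $G/\ze{G}$ is restricted.

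The main obstacle is exactly the step showing that commuting elements of class sizes $p$ and $q$ can be chosen with $\ce{G}{xy}=\ce{G}{x}\cap\ce{G}{y}$. My first attempt would be the primary-component reduction: take $x$ a $p'$-element with $p\mid |x^G|$, noting that $p$-elements of class size $p$ are central modulo structure. Combined with the Chillag–Herzog classification cited as (1), this should show that $\ce{G}{x}\cap\ce{G}{y}$ is the centralizer of a suitable product. If that fails in general, I would fall back on the results quoted in (2) and (3) to handle small $n$, and induct on $n$ by exhibiting, for each additional prime size, a new composite size $p_ip_j$ not previously counted.
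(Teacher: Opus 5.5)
There is a genuine gap, and it is in your basic tool. The computation $|G:\ce{G}{x}\cap\ce{G}{y}|=pq$ is right, but ``in particular $x$ and $y$ commute'' is false, even up to conjugation and even for elements of coprime orders. In $S_3$ a $3$-cycle $x$ has $|x^G|=2$ and a transposition $y$ has $|y^G|=3$. No conjugates of them commute, and $xy$ is a transposition, so $|(xy)^G|=3\neq 6$. All you get for free is that $|(xy)^G|$ divides $pq$ (Lemma~\ref{basic}(a)).

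Commutation up to conjugation has to come from a Sylow argument. If $x_i$ is a $t_i$-element with $t_i\neq p_j$, then $\ce{G}{x_j}$, of index $p_j$, contains a Sylow $t_i$-subgroup of $G$, so a conjugate of $x_i$ centralises $x_j$. Lemma~\ref{basic} then gives $p_ip_j$ provided also $t_i\neq t_j$. So pairs with $t_i=p_j$ and $t_j=p_i$ are lost even when the orders are coprime. When the $t_i$ are pairwise distinct these lost pairs form a matching, so one still gets $\binom{k}{2}-\lfloor k/2\rfloor\ge k$ composites for $k\ge 4$, as in Case I of the paper; but this has to be argued.

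Worse, the conclusion you ``grant'' is false, not just unproved. It would give $|cs_c(G)|\ge|cs_p(G)|$ whenever $|cs_p(G)|\ge 3$. Yet $S_3\times E$, with $E$ extraspecial of order $5^3$, has $cs=\{1,2,3,5,10,15\}$. For $k=4$, $S_3\times(C_{11}\rtimes C_5)$ has prime class sizes $2<3<5<11$, but neither $6=p_1p_2$ nor $55=p_3p_4$ is a class size, so your path does not exist (the theorem still holds there, with equality). Which pairs give composites genuinely depends on $G$. Your fallback induction presupposes the same unproved step, and (2), (3) concern $n\le 2$, so they do not help.

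The case you postpone, many $x_i$ being $t$-elements for a single prime $t$, is where the real work lies. The remark $x^G\subseteq x[x,G]$ does not control $|(x_ix_j)^G|$ there. The missing ingredient is the Beltr\'an--Kazarin Lemma~\ref{lemma_kazarin}. Since $|(x_ix_j)^G|$ divides $p_ip_j$ and $x_ix_j\notin\ze{G}$, it lies in $\{p_i,p_j,p_ip_j\}$. By that lemma, a pair of $t$-elements fails to give $p_ip_j$ only if $\max\{p_i,p_j\}=t$. Hence at most $k-1$ pairs are lost, and you get $\binom{k-1}{2}$ distinct composites.

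That is enough for $k\ge 5$, but gives only three composites for $k=4$. Excluding $|cs_c(G)|=3$ there needs a further structural argument. One shows that no non-central $t'$-element has prime-power class size; then a suitable $t'$-element times the $x_i$ with $|x_i^G|=t$ has a class size with three prime divisors. Mixed configurations, where only some of the $t_i$ coincide, need their own counts (Cases III and IV of Theorem~\ref{theoremB}).

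For comparison, the paper proves the stronger prime-power statement, Theorem~\ref{theoremB}, using minimal powers $p_r^{a_r}$ and exactly this case analysis. It then deduces Theorem~\ref{theoA}, treating $|cs_{npp}(G)|\le 2$ via Chillag--Herzog and Propositions~\ref{propB1} and~\ref{propB2}.
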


As an immediate consequence, we obtain that if $G$ has $n\geq 3$ composite class sizes, then it has at most $2n+1$ class sizes. Hence we can control the number of class sizes by the amount of composite integers in $cs(G)$; this is certainly no longer true when one only considers the amount of prime numbers in $cs(G)$, since in virtue of the main result of \cite{CosseyHawkes} for any prime $p$ and any integer $k\geq 2$ there exists a $p$-group $G$ with $cs(G)=\{1,p,p^2,p^3,...,p^k\}$.

We remark that the bound given in Theorem \ref{theoA} may be attained with equality for some values of $n$. In fact, it is enough to consider the direct product $G=A\times B$ of two groups such that $cs(A)=\{1,p,q\}$ and $cs(B)=\{1,r,s\}$, for pairwise different primes $p,q,r$ and $s$. It follows that $cs(G)=\{1,p,q,r,s,pr,ps,qr,qs\}$ so $|cs_p(G)|=4=|cs_c(G)|$. We point out that the structure of these direct factors is well-known: $A/\ze{A}$ and $B/\ze{B}$ are Frobenius groups of orders $pq$ and $rs$, respectively, and the inverse images of the kernels and complements are abelian (see Theorem~\ref{disconnected}). At the end of the paper we also illustrate that the bound given in Theorem~\ref{theoA} may not be sharp for other values of $n$ (see Proposition \ref{propC}); indeed, we believe that the bound is quite weak for larger values of $|cs_c(G)|$.

It is worth mentioning that Theorem~\ref{theoA} fits into a broader setting; in fact, we proved it as a consequence of a more general result concerning the number of class sizes that are not prime powers:

\begin{theoremletters}
\label{theoB}
If $G$ is a group with $n\geq 3$ class sizes that are not prime powers, then there are at most $n$ primes whose powers appear as class sizes of $G$. Further, if $G$ has only two non-prime-power class sizes, then there are at most three primes whose powers appear as class sizes of $G$. In addition, if $G$ possesses a unique non-prime-power class size, then there are at most two primes whose powers appear as class sizes of $G$.
\end{theoremletters}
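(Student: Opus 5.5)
Write $\pi_0(G)$ for the set of primes $p$ such that some nontrivial power of $p$ lies in $cs(G)$, and let $n$ be the number of class sizes of $G$ that are not prime powers. The plan is to associate, to each prime $p\in\pi_0(G)$, a distinct non-prime-power class size, or at least enough of them to bound $|\pi_0(G)|$ by $n$ once $n\geq 3$.

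The basic tool is the classical observation that if $x$ and $y$ are elements whose class sizes are coprime, then $G=\ce{G}{x}\ce{G}{y}$. With some care about orders one can then arrange that $|x^G|\,|y^G|$, or at least a multiple of each of the two sizes involving both primes, occurs as the size of the class of a suitable product, such as $xy$ after replacing $x,y$ by commuting powers or $p$- and $q$-parts. The prime-power class sizes are governed by Kazarian-type results: if $|x^G|$ is a power of $p$, then $x$ lies, modulo the centre, in a normal subgroup of a very restricted shape, and $\rad{p}{G/\ze{G}}$ or a related $p$-complement structure appears.

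\textbf{Case 1: the prime graph on $cs(G)$ is disconnected.} By the Bertram--Herzog--Mann / Dolfi classification (Theorem~\ref{disconnected}), $G$ is quasi-Frobenius with abelian kernel and complement preimages. Then $cs(G)=\{1,m,n'\}$ up to this structure, so $|\pi_0(G)|\leq 2$. Moreover there are no non-prime-power sizes unless $m$ or $n'$ is composite, and that case is handled directly.

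\textbf{Case 2: the graph is connected.} For distinct primes $p,q\in\pi_0(G)$ with $p^a,q^b\in cs(G)$, take elements $x_p,x_q$ realising these sizes. Since the sizes are coprime, $G=\ce{G}{x_p}\ce{G}{x_q}$, and Kazarian's theorem lets us take $x_p$ to be a $p'$-element and $x_q$ a $q'$-element. I would show that some element, typically a product of a suitable power of $x_p$ with $x_q$, has class size divisible by $pq$. This yields a map from pairs in $\pi_0(G)$ to non-prime-power sizes, and the counting problem is to show that this map has enough distinct images.

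Concretely, if $|\pi_0(G)|=k$, I aim to produce at least $k$ distinct non-prime-power sizes whenever $k\geq 4$, and at least $3$ when $k=3$. The candidates are the sizes $|(x_px_q)^G|$, whose sets of prime divisors contain $\{p,q\}$. Sizes attached to disjoint pairs are distinct unless a single size is divisible by all four primes. To handle this, I would exploit the fact that the centralizer of $x_px_q$ equals $\ce{G}{x_p}\cap\ce{G}{x_q}$. Its index is then controlled, and in fact equals $p^aq^b$ by the factorization $G=\ce{G}{x_p}\ce{G}{x_q}$. So the size attached to $\{p,q\}$ has prime support exactly $\{p,q\}$, and the $\binom{k}{2}$ pairs give pairwise distinct sizes. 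This gives $n\geq\binom{k}{2}$, which already implies $k\leq n$ for $n\geq 3$, and $k\leq 3$ when $n\leq 2$ apart from small cases.

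\textbf{Main obstacle.} The delicate point is the claim that $\ce{G}{x_px_q}=\ce{G}{x_p}\cap\ce{G}{x_q}$. This requires $x_p$ and $x_q$ to commute and to have coprime orders. I would first try to obtain commuting representatives using the factorization: conjugate $x_q$ within its class into $\ce{G}{x_p}$, which is possible since $|x_q^G|$ is coprime to $|G:\ce{G}{x_p}|$. I would then pass to suitable primary parts via Kazarian's theorem to secure coprime orders.

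If that fails, the fallback for the cases $n=1,2$ is to quote \cite{JSZ} and \cite{MS}, whose proofs actually bound primes rather than just prime class sizes.
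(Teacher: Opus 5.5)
There is a genuine gap. The key claim of your Case~2 is that each pair $p\neq q$ in $\pi_0(G)$ produces a class size with prime support exactly $\{p,q\}$, so that $n\geq\binom{k}{2}$. This is false, and the paper's own examples refute it. If $cs(A)=\{1,p,q\}$ and $cs(B)=\{1,r,s\}$, then $G=A\times B$ has $cs(G)=\{1,p,q,r,s,pr,ps,qr,qs\}$. The prime graph is connected and $k=4=n$, yet no class size is divisible by $pq$ or $rs$, and $\binom{4}{2}=6>4$. Likewise, $cs(B)=\{1,r\}$ gives $k=3$ and $n=2$.

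The step that breaks is obtaining commuting, coprime-order representatives. The factorisation $G=\ce{G}{x_p}\ce{G}{x_q}$ only yields $x_q^G=x_q^{\ce{G}{x_p}}$; it does not put any conjugate of $x_q$ inside $\ce{G}{x_p}$. In $S_3$ a transposition has class size $3$ and a $3$-cycle has class size $2$, but no transposition centralises a $3$-cycle. Nor does Kazarin's theorem, which only gives solubility of $\langle x_p^G\rangle$, let you take $x_p$ to be a $p'$-element. After primary decomposition you only know that $x_p$ is a $t_p$-element for some prime $t_p$. This $t_p$ may be $p$ itself (e.g.\ when a nonabelian $p$-group with class sizes $\{1,p\}$ is a direct factor), may equal $q$, or may coincide with $t_q$.

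What is genuinely available is weaker. If $t_p\neq q$, then $\ce{G}{x_q}$ contains a Sylow $t_p$-subgroup of $G$, so a conjugate of $x_p$ commutes with $x_q$. If also $t_p\neq t_q$, Lemma~\ref{basic} gives $|(x_px_q)^G|=p^aq^b$. Since each $t_r$ equals at most one of the other primes, this covers many pairs but not all. When several realising elements are $t$-elements for one prime $t$, $|(x_ix_j)^G|$ merely divides $p_i^{a_i}p_j^{a_j}$ and can itself be a prime power. Here the paper uses the Beltr\'an--Kazarin lemma (Lemma~\ref{lemma_kazarin}): such a size equals $\max\{p_i^{a_i},p_j^{a_j}\}$ and is a power of $t$. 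It combines this with a count split according to how many of the $t_r$ coincide (Cases I--IV in the proof of Theorem~\ref{theoremB}). That count shows $n+1$ primes would force more than $n$ non-prime-power sizes, so the correct bound is linear, not quadratic.

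Your fallback for $n\in\{1,2\}$ also fails. The papers \cite{JSZ} and \cite{MS} assume one or two \emph{composite} class sizes. But $|cs_{npp}(G)|\leq 2$ does not bound the number of composite sizes, since prime powers $p^2,p^3,\dots$ are composite, so those results do not apply. The paper handles these cases by separate arguments (Propositions~\ref{propB1} and \ref{propB2}), using Lemma~\ref{lemma_CC}, Theorem~\ref{disconnected}, Lemma~\ref{Ito} and Lemma~\ref{lemmaJS}.
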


We recall that groups $G$ which have not any non-prime-power class size where already characterised by Chillag and Herzog in \cite[Theorem 2]{CH}, and in particular they showed that the class sizes of $G$ are powers of at most two different primes. Observe that groups satisfying the last assertion of Theorem~\ref{theoB} are necessarily soluble by \cite[Proposition 3.2]{CC}. Another interesting remark concerning Theorem~\ref{theoB} is that the bounds may also be attained with equality (see Example \ref{ex}).
 
To close with, we highlight that results about the set of class sizes of a group sometimes correspond to similar results about the set of irreducible character degrees. Theorem \ref{theoA} can be seen as another instance of such a correspondence, since its counterpart for the set of irreducible character degrees was addressed in \cite{A}, although for soluble groups only.


\section{Preliminaries}

Let $G$ be a group. In the following, we write $x^G$ for the conjugacy class of an element $x\in G$, and we write $\pi(x^G)$ for the set of prime divisors of $|x^G|=|G:\ce{G}{x}|$. We will frequently use that each element $g\in G$ can be decomposed as a product of pairwise commuting elements of prime power order, say $g_{q_1},...,g_{q_s}$, for some integer $s\geq 1$ and certain primes $\{q_1,...,q_s\}$; each $q_i$-element $g_{q_i}$ is called the \emph{$q_i$-part} of $g$, and such a factorisation is called the \emph{primary decomposition} of $g$. The remaining notation and terminology used are standard in the framework of group theory.

Let us start by stating some elementary properties that will be frequently used.

\begin{lemma}
\label{basic}
Let $G$ be a group. 
\vspace*{-2mm}
\begin{itemize}
\setlength{\itemsep}{0mm}
\item[\emph{(a)}] If $(|x^G|,|y^G|)=1$ for certain $x,y\in G$, then $|(xy)^G|$ divides $|x^G|\cdot|y^G|$.
\item[\emph{(b)}] If $x,y\in G$ have coprime orders and they commute, then $\ce{G}{xy}=\ce{G}{x}\cap\ce{G}{y}$.
\end{itemize}
\vspace*{-2mm}
In particular, if $x,y\in G$ have coprime orders, coprime class sizes, and they commute, then $|x^G|\cdot |y^G|=|(xy)^G|$.
\vspace*{-2mm}
\begin{itemize}
\setlength{\itemsep}{0mm}
\item[\emph{(c)}] A prime $p$ does not divide any element of $cs(G)$ if and only if $G=P\times \rad{p'}{G}$, where $P$ is an abelian Sylow $p$-subgroup of $G$.
\end{itemize}
\end{lemma}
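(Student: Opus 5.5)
The plan is to prove (a) and (b) directly from the definition $|x^G|=|G:\ce{G}{x}|$, then combine them for the ``in particular'' clause, and to obtain (c) from Burnside's normal $p$-complement theorem.

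For (a), we first show $\ce{G}{x}\ce{G}{y}=G$. The index of $\ce{G}{x}\cap\ce{G}{y}$ in $G$ is divisible by both $|G:\ce{G}{x}|$ and $|G:\ce{G}{y}|$. These are coprime, so the index is divisible by their product, while it is always at most that product. Hence
$$|G:\ce{G}{x}\cap\ce{G}{y}|=|x^G|\cdot|y^G|.$$
Since $\ce{G}{x}\cap\ce{G}{y}\le \ce{G}{xy}$, the index $|(xy)^G|=|G:\ce{G}{xy}|$ divides this number.

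For (b), the inclusion $\ce{G}{x}\cap\ce{G}{y}\le\ce{G}{xy}$ is trivial. Conversely, let $o(x)=m$ and $o(y)=k$ with $(m,k)=1$. Because $x$ and $y$ commute, $x$ and $y$ are powers of $xy$: choose integers $a,b$ with $a\equiv 1 \pmod m$ and $a\equiv 0 \pmod k$, so that $(xy)^a=x$, and similarly for $y$. Any element centralising $xy$ therefore centralises both $x$ and $y$.

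For the ``in particular'' clause, by (b) we have $\ce{G}{xy}=\ce{G}{x}\cap\ce{G}{y}$. The computation in (a) gives that the index of this intersection is exactly $|x^G|\cdot|y^G|$, which yields $|(xy)^G|=|x^G|\cdot|y^G|$.

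For (c), the reverse implication is easy. If $G=P\times\rad{p'}{G}$ with $P$ abelian, then for $g=g_1g_2$ with $g_1\in P$ and $g_2\in\rad{p'}{G}$ we get
$$\ce{G}{g}=P\times\ce{\rad{p'}{G}}{g_2},$$
so every class size divides $|\rad{p'}{G}|$ and is coprime to $p$.

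For the forward implication, which is the main step, suppose $p$ divides no class size. Then every element $g$ has a centraliser containing a Sylow $p$-subgroup. Applied to $g\in P\in\syl{p}{G}$, some conjugate $P^h$ centralises $g$, and $P^h$ also centralises $g^{h^{-1}}$ conjugated appropriately. The clean route is this: for $g\in P$, $\ce{G}{g}\supseteq P^h$ for some $h$, so $g$ centralises $P^h$. Then $g^{h^{-1}}\in P$ is centralised by... this gets messy. A simpler route is to note that $\ze{P}$ meets every... Instead, count directly. For $x\in P$, since $p\nmid|G:\ce{G}{x}|$, the centraliser $\ce{G}{x}$ contains a Sylow $p$-subgroup of $G$. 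Its Sylow subgroups that contain $x$ include a $G$-Sylow containing $x$, and we may take it to be $P$ after Sylow's theorem inside $\ce{G}{x}$: $x$ lies in some Sylow $p$-subgroup $Q$ of $\ce{G}{x}$, and $|Q|=|P|$. So $x\in\ze{Q}$, and $Q$ and $P$ are both Sylow subgroups of $G$ containing $x$. This shows every $p$-element is central in some Sylow $p$-subgroup.

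Now take $x\in P$ and suppose $P$ is nonabelian. Choose $x$ of maximal... Rather than pursue this, I would use the standard fact that if $p\nmid |x^G|$ for all $x$, then $P\le\ce{G}{x}$ for $x\in P$ up to conjugacy. Apply it to $x\in\ze{P}$-conjugates, and argue by induction that $P$ is abelian via $\ze{P}\cap$ conjugates. Once $P$ is abelian, it lies in the center of its normalizer? Not quite. Instead, every $p'$-element $y$ satisfies $P^h\le\ce{G}{y}$ for some $h$, so $y$ centralises a Sylow $p$-subgroup. Hence $P$ centralises its normalizer's $p'$-part, giving $N_G(P)=\ce{G}{P}$ when $P$ is abelian. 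Burnside's theorem then gives $G=P\rad{p'}{G}$. Finally, each $y\in\rad{p'}{G}$ centralises some conjugate $P^h$. Since $G=P\rad{p'}{G}$, we may write $h\in\rad{p'}{G}P$, and normality of $\rad{p'}{G}$ lets us conclude that $[P,\rad{p'}{G}]=1$, so the product is direct.

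The main obstacle is proving that $P$ is abelian. I would try the argument that $\ze{P}$ has $p'$-index-free classes and use that $x\in P$ lies in the center of some Sylow subgroup $P^h$. Then $P$ is covered by the centers of its conjugates intersected with $P$, and a counting/normal-subgroup argument should force $\ze{P}=P$.
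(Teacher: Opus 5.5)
The paper records this lemma as elementary and gives no proof, so your argument has to stand on its own. Your proofs of (a), (b), the ``in particular'' clause, and the reverse implication of (c) are correct. The forward implication of (c), however, is not proved. You never show that $P$ is abelian; you stop at the hope that ``a counting/normal-subgroup argument should force $\ze{P}=P$''. The later steps also do not follow from what you state.

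Knowing that a $p'$-element $y\in N_G(P)$ centralises \emph{some} Sylow $p$-subgroup does not give $y\in\ce{G}{P}$. For example, in $S_5$ with $p=3$, the transposition $(1\,2)$ normalises $\langle(1\,2\,3)\rangle$ and centralises the Sylow $3$-subgroup $\langle(3\,4\,5)\rangle$, yet it does not centralise $\langle(1\,2\,3)\rangle$. So $N_G(P)=\ce{G}{P}$ is unjustified as argued.

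Your final step has a similar gap. Put $K=\rad{p'}{G}$ and write $h=uk$ with $u\in P$, $k\in K$. This only shows that each $y\in K$ is $K$-conjugate into $\ce{K}{P}$. Normality of $K$ alone does not then give $[P,K]=1$. The underlying problem is that you restricted the hypothesis to $p$-elements, which discards most of it.

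The missing idea is to apply the hypothesis to every element of $G$ at once. For each $g\in G$, $p\nmid|G:\ce{G}{g}|$ means that $\ce{G}{g}$ contains some Sylow $p$-subgroup $P^h$, i.e.\ $g\in\ce{G}{P^h}=\ce{G}{P}^h$. Hence $G=\bigcup_{h\in G}\ce{G}{P}^h$. A finite group is never the union of the conjugates of a proper subgroup, so $\ce{G}{P}=G$, that is, $P\le\ze{G}$.

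This gives in one stroke that $P$ is abelian and normal. A $p$-complement $K$ then exists, by Schur--Zassenhaus or by Burnside's theorem since $P\le\ze{N_G(P)}$. $K$ is normal because $G=PK$ with $P$ central, so $K=\rad{p'}{G}$ and $G=P\times\rad{p'}{G}$. The same union-of-conjugates fact, applied inside $K$ to the subgroup $\ce{K}{P}$, is exactly what your last step was missing.
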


The next results are helpful when dealing with class sizes that are coprime.

\begin{lemma}\emph{(\cite[Lemma 1]{CCcoprime})}
\label{lemma_CC}
Let $G$ be a group and $1<a<b_1<b_2$ be pairwise coprime elements of $cs(G)$. Then there exist some $i\in \{1,2\}$ and some $c\in cs(G)$ such that $c>b_i$ and $c$ divides $ab_i$.
\end{lemma}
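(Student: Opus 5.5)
The plan is to build, from elements realising $a,b_1,b_2$, new elements whose class sizes are forced into the interval $(b_i, ab_i]$, by combining Lemma~\ref{basic}(a) with an index computation. Fix $x,y_1,y_2\in G$ with $|x^G|=a$, $|y_1^G|=b_1$ and $|y_2^G|=b_2$. Since $(a,b_i)=1$, the indices $|G:\ce{G}{x}|$ and $|G:\ce{G}{y_i}|$ are coprime, so $G=\ce{G}{x}\ce{G}{y_i}$ and $|G:\ce{G}{x}\cap\ce{G}{y_i}|=ab_i$. After replacing $y_i$ by a suitable conjugate, I may assume that $x$ and $y_i$ behave as in Lemma~\ref{basic}(a). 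Then $c_i:=|(xy_i)^G|$ divides $ab_i$.

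The first key step is a lower bound on $c_i$. Because $x$ commutes with $xy_i$, we have $\ce{G}{xy_i}\cap\ce{G}{x}=\ce{G}{x}\cap\ce{G}{y_i}$. Comparing indices gives
$$ab_i=|G:\ce{G}{xy_i}\cap\ce{G}{x}|\le |G:\ce{G}{xy_i}|\,|G:\ce{G}{x}|=c_ia,$$
so $c_i\ge b_i$. The same computation with $\ce{G}{y_i}$ in place of $\ce{G}{x}$ gives $c_i\ge a$. Hence, if $c_i\neq b_i$ for some $i$, we may take $c=c_i$ and the lemma follows.

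The remaining case, which I expect to be the main obstacle, is $c_1=b_1$ and $c_2=b_2$. In that case $|(xy_i)^G|=b_i$, and equality holds in the index computation. Therefore $\ce{G}{xy_i}\cap\ce{G}{x}$ has index $a$ in $\ce{G}{xy_i}$, and $G=\ce{G}{x}\ce{G}{xy_i}$. I would try to contradict this using the hypotheses $a<b_1<b_2$ and the pairwise coprimality of all three numbers. My first attempt would be to run the same argument on the pairs $(y_1, xy_2)$ and $(y_1,y_2)$. Their class sizes are coprime, so the element $y_1y_2$ (or $xy_1y_2$) has class size dividing $b_1b_2$ and at least $b_2$. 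If that size exceeds $b_2$ but does not give the required divisibility, I would go back and form the product with $x$ once more.

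What I must exclude is a configuration in which every such product collapses back to size exactly $b_1$ or $b_2$. The ordering $a<b_1<b_2$ should enter precisely here. When a class size equals $b_i$, the $\pi(a)$-part of $G$ is absorbed into centralisers, in the sense that $\ce{G}{xy_i}$ contains a Hall $\pi(a)$-subgroup of $G$ up to conjugation. Combining this for $i=1$ and $i=2$ should force $x$ to centralise a full Hall $\pi(a)$-subgroup together with enough of the rest of $G$ to contradict $|x^G|=a>1$. This final centraliser argument is the step I am least sure of and would need to check carefully.
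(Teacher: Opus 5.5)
Your proposal has a genuine gap: the case $c_1=b_1$, $c_2=b_2$ is never closed. The paper gives no proof of this lemma (it is quoted from the literature), so I assess your argument on its own merits.

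\textbf{What works.} The reduction is sound. Since $\gcd(a,b_i)=1$, $c_i=|(xy_i)^G|$ divides $ab_i$, and your index estimate gives $c_i\ge\max\{a,b_i\}$. So only the case $c_1=b_1$, $c_2=b_2$ remains. Two minor slips: $x$ need not commute with $xy_i$, although $\ce{G}{x}\cap\ce{G}{xy_i}=\ce{G}{x}\cap\ce{G}{y_i}$ holds anyway because $y_i=x^{-1}(xy_i)$; and Lemma~\ref{basic}(a) needs no conjugation.

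\textbf{Where it stalls.} The remaining case is the whole content of the lemma.
\begin{itemize}
\item The two facts you extract from $c_i=b_i$ are $G=\ce{G}{x}\ce{G}{xy_i}$ and $|\ce{G}{xy_i}:\ce{G}{xy_i}\cap\ce{G}{x}|=a$. Both hold for \emph{any} element of class size $b_i$, because they only use $\gcd(a,b_i)=1$. They carry no information linking $x$ to $y_i$, so no contradiction can come from them.
\item The Hall-subgroup idea fails. $G$ is arbitrary, so Hall $\pi(a)$-subgroups need not exist. In any case $\ce{G}{y_1}$ and $\ce{G}{y_2}$ already contain Sylow $p$-subgroups for every $p\in\pi(a)$, whether or not the lemma's conclusion holds. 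Such containments therefore say nothing about $\ce{G}{x}$.
\item The detour through $y_1y_2$ (or $xy_1y_2$) is off target, since it only produces class sizes dividing $b_1b_2$, not $ab_i$.
\end{itemize}

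\textbf{The missing idea.} Work with products of classes rather than centralisers.
\begin{enumerate}
\item From $G=\ce{G}{x}\ce{G}{y_i}$ one gets $x^Gy_i^G=(xy_i)^G$. Given $g,h\in G$, write $gh^{-1}=uv$ with $u\in\ce{G}{x}$ and $v\in\ce{G}{y_i}$. Then $x^gy_i^h=(x^{v}y_i)^h=(xy_i)^{vh}$.
\item Suppose $|(xy_i)^G|=b_i=|y_i^G|$. For each $x'\in x^G$, the set $x'y_i^G\subseteq(xy_i)^G$ has the same cardinality, so $x'y_i^G=(xy_i)^G$.
\item Hence every $(x')^{-1}x''$ with $x',x''\in x^G$ lies in the subgroup $S_i=\{g\in G : gy_i^G=y_i^G\}$. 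Its order divides $b_i$, because $y_i^G$ is a union of right cosets of $S_i$.
\item If this happens for both $i=1$ and $i=2$, then $x^{-1}x^g\in S_1\cap S_2$ for all $g\in G$. But $|S_1\cap S_2|$ divides $\gcd(b_1,b_2)=1$, so $x\in\ze{G}$, contradicting $a>1$.
\end{enumerate}
This uses only the pairwise coprimality and $a>1$. The ordering $a<b_1<b_2$, which you expected to be decisive, plays no role in the argument.
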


\begin{theorem}\emph{(\cite[Theorem 4]{D})}
\label{disconnected}
Let $G$ be a group and $\pi$ be a set of primes. Suppose that each element of $cs(G)$ is either a $\pi$-number or a $\pi'$-number, and that both cases occur. Then, up to abelian direct factors (and up to interchanging $\pi$ and $\pi'$), $G=HL$ with $H\in \hall{\pi}{G}$, $L\in\hall{\pi'}{G}$, $L\unlhd G$, both $H$ and $L$ abelian, and $G/\ze{G}$ a Frobenius group. In particular, $cs(G)=\{1,|L|, |H/\ze{G}|\}$.
\end{theorem}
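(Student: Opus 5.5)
The plan is to work with the bipartition of the noncentral class sizes. Write $\mathcal{A}$ for the set of elements whose class size is a $\pi$-number, together with $\ze{G}$, and $\mathcal{B}$ for the analogous set for $\pi'$-numbers. By hypothesis both $\mathcal{A}\setminus \ze{G}$ and $\mathcal{B}\setminus\ze{G}$ are nonempty. The first observation is elementary: if $x\in\mathcal{A}$ and $y\in\mathcal{B}$ are noncentral, then $|x^G|$ and $|y^G|$ are coprime. Hence $G=\ce{G}{x}\ce{G}{y}$, so $\ce{G}{x}$ contains a Hall $\pi'$-subgroup of $G$ and $\ce{G}{y}$ contains a Hall $\pi$-subgroup of $G$. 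Since $\ce{G}{g}\le \ce{G}{g_q}$, the class size of every primary part divides $|g^G|$. So I will first reduce to $\pi$-elements and $\pi'$-elements, and show that a noncentral element lies in $\mathcal{A}$ or $\mathcal{B}$ according to its primary parts.

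The next step is to show that one of the two sets, say $\mathcal{B}$ after interchanging $\pi$ and $\pi'$, is an abelian normal Hall $\pi'$-subgroup $L$ modulo abelian direct factors. I would take a noncentral $\pi'$-element $y$ and a noncentral $\pi$-element $x$ that commute; they exist by the Hall-subgroup containment above, after conjugating. By Lemma~\ref{basic}(b), $\ce{G}{xy}=\ce{G}{x}\cap\ce{G}{y}$. The class size of $xy$ must be a $\pi$-number or a $\pi'$-number, which forces $\ce{G}{x}\supseteq\ce{G}{y}$ or the reverse. Iterating this comparison, I expect to show that all noncentral $\pi$-elements have centralizers of the form (abelian Hall $\pi'$-subgroup)$\times\ze{G}_\pi$, and symmetrically. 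This yields abelian Hall subgroups $H$ and $L$, with $\ce{G}{h}\cap L\le\ze{G}$ for $h\in H\setminus\ze{G}$. Lemma~\ref{basic}(c) removes the primes dividing no class size, which accounts for the abelian direct factors.

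Finally, with $H,L$ abelian and centralizers of noncentral elements as above, normality of $L$ will come from a Frobenius-type argument on $G/\ze{G}$. The complement $H\ze{G}/\ze{G}$ acts fixed-point-freely on $L\ze{G}/\ze{G}$, and Frobenius' theorem gives the kernel. The class sizes are then read off directly: $|L|$ for noncentral elements of $H$ (their centralizer is $H$) and $|H/\ze{G}|$ for noncentral elements of $L$ (their centralizer is $L\ze{G}$). Lemma~\ref{lemma_CC} can serve as a consistency check that no third pairwise-coprime size survives.

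The main obstacle is the second step. It requires deciding which side is normal and proving that the Hall subgroups are abelian, that is, excluding configurations in which several distinct class sizes occur within the same prime set. There I would first try to exploit that $\ce{G}{x}\cap\ce{G}{y}$ can never be a proper intersection of both, comparing orders via Lemma~\ref{basic}(a). This is the delicate point of the original argument in \cite{D}.
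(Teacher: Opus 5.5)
There is a genuine gap, and it sits where the whole content of the theorem lies. The paper does not prove this statement; it only quotes Theorem~4 of the cited reference, so your outline has to stand on its own.

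Your second step starts from commuting noncentral elements $x$ of $\pi$-order and $y$ of $\pi'$-order, obtained ``by the Hall-subgroup containment''. That containment concerns elements whose \emph{class size} is a $\pi$-number, and these need not have $\pi$-order. In the structure you are aiming for they are the noncentral elements of $L\ze{G}$, that is, $\pi'$-elements modulo $\ze{G}$. Worse, in the conclusion no noncentral $\pi$-element commutes with any noncentral $\pi'$-element, because in the Frobenius group $G/\ze{G}$ no nontrivial complement element centralises a nontrivial kernel element. So the pair you start from never exists under the hypotheses.

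Even for commuting noncentral $x,y$ of coprime orders, Lemma~\ref{basic}(b) only shows that $|x^G|$ and $|y^G|$ both divide $|(xy)^G|$. Hence both are $\pi$-numbers or both are $\pi'$-numbers; nothing forces $\ce{G}{x}\supseteq\ce{G}{y}$ or the reverse, so there is no comparison to iterate. What Lemma~\ref{basic}(b) plus Sylow containment actually gives is an incompatibility statement. A noncentral element with $\pi$-class size and a noncentral element with $\pi'$-class size can commute only if, modulo $\ze{G}$, both are $r$-elements for the same prime $r$. Consequently, if a noncentral $r$-element has $\pi$-class size and a noncentral $s$-element has $\pi'$-class size with $r\neq s$, then $r\in\pi'$ and $s\in\pi$.

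Beyond this, the substantive claims are only announced (``I expect to show''):
\begin{itemize}
\item In step one you may only assert that $\ce{G}{x}$ contains a Sylow $r$-subgroup of $G$ for each $r\in\pi'$. $G$ is not known to be soluble or $\pi$-separable, so Hall subgroups are not yet available.
\item You still have to prove that each family consists of a single class size, that the Hall subgroups are abelian, and that no prime $r$ has noncentral $r$-elements with class sizes of both kinds.
\item Your Frobenius step presupposes that $L$ is normal, or that $H\ze{G}/\ze{G}$ is a self-normalising trivial-intersection subgroup of $G/\ze{G}$. Neither has been established.
\end{itemize}

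A workable start is the incompatibility above. If no prime carries both kinds, it shows that every noncentral element is a $\pi$- or a $\pi'$-element modulo $\ze{G}$. It also shows that for a noncentral $\pi$-element $g$ one has $\ce{G}{g}=Z_{\pi'}\times M$, where $Z_{\pi'}$ is the Hall $\pi'$-subgroup of $\ze{G}$ and $M\in\hall{\pi}{G}$ (by Schur--Zassenhaus). This produces the Hall subgroups and gives $cs(G)=\{1,m,n\}$ with $(m,n)=1$. The abelianness of the Hall subgroups, the Frobenius structure, and the exclusion of a prime carrying both kinds still need real arguments, and these are exactly what your proposal leaves open.
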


The following is a key ingredient to prove our main results.

\begin{lemma}\emph{(\cite[Lemma 4]{BK})}
\label{lemma_kazarin}
Let $G$ be a group and $t$ be a prime. Suppose that the $t$-elements $x,y \in G \setminus Z(G)$ are such that $|x^G|$ and $|y^G|$ are powers of distinct primes, and $|(xy)^G|$ is also a power of a prime. Then $\langle x,y\rangle \leqslant \rad{t}{G}$, and $|(xy)^G|= \max\{|x^G|,|y^G|\}$ is a power of $t$.
\end{lemma}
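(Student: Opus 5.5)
The plan is to push everything onto centralisers of the three elements $x$, $y$ and $xy$, and then to bring in the classical consequences of Kazarin's theorem on elements with prime-power class size. Write $|x^G|=p^a$ and $|y^G|=q^b$ with $p\neq q$, both $a,b\geq 1$ since $x,y\notin\ze{G}$. Write $|(xy)^G|=r^c$.

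\textbf{Step 1: $r\in\{p,q\}$, and $r^c$ is the maximum.} Since $(p^a,q^b)=1$, we have $G=\ce{G}{x}\ce{G}{y}$. Hence $|G:\ce{G}{x}\cap\ce{G}{y}|=p^aq^b$. As $\ce{G}{x}\cap\ce{G}{y}\leq \ce{G}{xy}$, the number $r^c$ divides $p^aq^b$. So $r\in\{p,q\}$, and $c\geq 1$ because $r^c=1$ would force $xy\in\ze{G}$ and then $|x^G|=|y^{-1\,G}|$, which is impossible. Say $r=p$.

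Now $(p^c,q^b)=1$, so the same argument applied to the pair $xy,y^{-1}$ gives $G=\ce{G}{xy}\ce{G}{y}$. It also gives $\ce{G}{xy}\cap\ce{G}{y}\leq\ce{G}{x}$, so $p^a$ divides $p^cq^b$, that is, $a\leq c$. Symmetrically $c\leq a$, so $r^c=p^a$.

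Next I would show $p^a\geq q^b$, so that $|(xy)^G|=\max\{|x^G|,|y^G|\}$. For this I compare Sylow $q$-subgroups. Both $\ce{G}{x}$ and $\ce{G}{xy}$ have $p$-power index, so each contains a Sylow $q$-subgroup of $G$. Counting orders in the factorisation $G=\ce{G}{xy}\ce{G}{y}$ then forces the $q$-part of $|G:\ce{G}{y}|$ to be absorbed. I expect this to reduce to $q^b\leq p^a$, though this bookkeeping needs care.

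\textbf{Step 2: $r=t$ (the main obstacle).} By Kazarin's theorem, $\langle x^G\rangle$ and $\langle y^G\rangle$ are soluble normal subgroups, so $N=\langle x^G\rangle\langle y^G\rangle$ is soluble and normal and contains $x,y,xy$. I would argue by contradiction, assuming $p\neq t$ (with $r=p$ as above). A $t$-element whose class size is a power of a prime different from $t$ centralises a Sylow $t$-subgroup that contains it, so $x$ and $xy$ both lie in the centre of some Sylow $t$-subgroup. Exploiting $G=\ce{G}{x}\ce{G}{y}$ inside the soluble group $N$, with Hall systems and coprime action, I would try to place $x$, $y$ and $xy$ in a common abelian $t$-subgroup. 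Then Lemma~\ref{basic}(b)-type multiplicativity would give $|(xy)^G|$ divisible by both $p$ and $q$, a contradiction. This is the step I expect to be hardest, and I would first try the case $q\neq t$ to see where the argument breaks.

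\textbf{Step 3: $\langle x,y\rangle\leq\rad{t}{G}$.} Once $r=t$, the element $xy$ is a $t$-element with $t$-power class size, so the known corollary of Kazarin's theorem gives $xy\in\rad{t}{G}$. Moreover $p=t$ by Step 1, so $|x^G|$ is a $t$-power and likewise $x\in\rad{t}{G}$. Therefore $y=x^{-1}(xy)\in\rad{t}{G}$, which completes the proof.
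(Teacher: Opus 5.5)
The paper gives no proof of this lemma; it is quoted from \cite{BK}. So your sketch has to stand on its own, and it has a genuine gap exactly where the content of the lemma lies. Step~2 contains no argument, and the route you outline breaks in two places.

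First, $x$ and $y$ are not assumed to commute, so $xy$ need not be a $t$-element: in $S_3$ the product of two distinct transpositions has order $3$. Hence ``$x$ and $xy$ both lie in the centre of some Sylow $t$-subgroup'' is unjustified, and Step~3 opens with the same unproved assumption.

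Second, even if $x,y$ sat in a common abelian $t$-subgroup, no multiplicativity would follow. Lemma~\ref{basic}(b) needs coprime orders, and for commuting $t$-elements you only have $\ce{G}{x}\cap\ce{G}{y}\leqslant\ce{G}{xy}$. You already used this, and it is perfectly compatible with $|(xy)^G|=p^a$.

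A smaller point: in Step~1 the inequality $q^b<p^a$ is only hoped for, and Sylow $q$-bookkeeping cannot compare a $q$-power with a $p$-power. Use instead that $\ce{G}{x}\cap\ce{G}{xy}=\ce{G}{x}\cap\ce{G}{y}$ has index $p^aq^b$, so $|\ce{G}{x}\ce{G}{xy}|=|G|q^b/p^a\leq|G|$.

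Here is an elementary way to close Step~2, with no Kazarin machinery. Since $G=\ce{G}{x}\ce{G}{y}$, each $x^g$ equals $x^v$ for some $v\in\ce{G}{y}$, so $x^gy=(xy)^v$. Together with $|x^G|=|(xy)^G|$ this gives $(xy)^G=x^Gy$. Conjugating by $k\in G$ yields $x^Gy^k=x^Gy$. So $x^G$ is a union of cosets of the normal subgroup $H=\langle y^ky^{-1} : k\in G\rangle$, which is nontrivial as $y\notin\ze{G}$; hence $|H|$ divides $p^a$.

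Now suppose $p\neq t$. The $p$-group $H$ acts on $y^G$, a set of size $q^b$ prime to $p$, so it fixes some $y^g$, and by normality $H\leqslant\ce{G}{y}$. Then $y^k=(y^ky^{-1})\,y$ is a product of a $p$-element and a commuting $t$-element, yet has order $o(y)$. This forces $y^k=y$ for all $k$, a contradiction. So $p=t$.

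For Step~3, first put $x$ in $\rad{t}{G}$. Take $T\in\syl{t}{G}$ containing some $T_0\in\syl{t}{\ce{G}{x}}$. Then $x\in T_0$, being a central $t$-element of $\ce{G}{x}$. Also $G=\ce{G}{x}T$ because $|G:\ce{G}{x}|$ is a $t$-power, so $x^G=x^T\subseteq T$. Next, $y\in\rad{t}{G}$ because $y^G\subseteq Hy\subseteq H\langle y\rangle$, which is a $t$-group.
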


The global information provided below on the $p$-structure of $G$ by the class sizes of its $p'$-elements will be essential at some points:

\begin{lemma}\emph{(\cite[Lemma 4]{JS})}
\label{lemmaJS}
Let $G$ be a group and $p$ be a prime. If every prime power order $p'$-element has class size $1$ or $m$, for some fixed positive integer $m$, then $m = p^aq^b$ for some prime $q \neq p$ and some integers $a,b \geq 0$. In particular, up to abelian direct factors, $G$ is a $\{p,q\}$-group.
\end{lemma}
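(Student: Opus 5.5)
The plan is to argue with Sylow subgroups and their centres, and to use the primary decomposition so that everything reduces to elements of prime power order. Put $m$ for the common class size of the noncentral $p'$-elements of prime power order. We may assume such an element exists, since otherwise $m=1$ and there is nothing to prove. The first observation is the following. Let $r\neq p$ be a prime dividing $m$ and let $R\in\syl{r}{G}$. Every $z\in\ze{R}$ satisfies $R\leq\ce{G}{z}$, so $r\nmid |z^G|$. Since $|z^G|\in\{1,m\}$ and $r\mid m$, this forces $\ze{R}\leq\ze{G}$. So for each prime $r\neq p$ dividing $m$, the centre of a Sylow $r$-subgroup is central in $G$.

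The main step is to show that at most one prime $q\neq p$ divides $m$. Suppose two distinct primes $q,r\neq p$ both divide $m$. I would pick a noncentral $q$-element $x$ of class size $m$ and choose it so that $|\ce{G}{x}|_r$ is as large as possible. Take $R_0\in\syl{r}{\ce{G}{x}}$, pick $y\in \ze{R_0}$, and extend $R_0$ to a Sylow $r$-subgroup $R$ of $G$. Now compare $\ce{G}{x}$ and $\ce{G}{y}$. The index of $\ce{G}{x}\cap\ce{G}{y}=\ce{G}{xy}$ (Lemma~\ref{basic}(b)) is divisible by both $|x^G|=m$ and $|y^G|$. The choice of $y$ gives $R_0\leq \ce{G}{x}\cap\ce{G}{y}$, so the $r$-part of $|y^G|$ is controlled. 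I would then play this off against $\ze{R}\leq\ze{G}$ and $r\mid m$ to reach a contradiction with the maximality of $|\ce{G}{x}|_r$. Running the same argument with $q$ and $r$ interchanged should give the contradiction. This is the step I expect to be the main obstacle: the hypothesis only concerns prime power order elements, so $xy$ cannot be used directly. All leverage has to come from centralizers of the single elements $x$ and $y$ together with the Sylow-centre observation above. If the direct counting fails, I would fall back on choosing $x\in\ze{Q_0}$ for a Sylow $q$-subgroup $Q_0$ of a suitable centralizer, and argue through a minimal counterexample. This yields $m=p^aq^b$.

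For the structural consequence, let $t\notin\{p,q\}$. Every noncentral $t$-element has class size $m$, which is coprime to $t$. The aim is to show that $t$ divides no class size of $G$; Lemma~\ref{basic}(c) then gives $G=T\times\rad{t'}{G}$ with $T$ abelian, and iterating over all such $t$ strips off an abelian direct factor, leaving a $\{p,q\}$-group. For an arbitrary $g$ with primary decomposition $g=g_{p}\,g_{q}\prod g_{s}$, Lemma~\ref{basic}(b) gives $\ce{G}{g}=\bigcap\ce{G}{g_i}$. For the $p'$-parts, each $|g_i^G|$ is $1$ or $m$, hence coprime to $t$. The remaining difficulty is the $p$-part. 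I would handle it by showing that a Sylow $t$-subgroup $T$ centralizes $\rad{p}{G}$-type pieces: the noncentral $t$-elements all have the same class size $m$, so $\ce{G}{x}$ contains a full Sylow $p$-complement-sized piece for each $t$-element $x$. This should force $T\leq\ze{G}$ modulo an abelian direct factor, so that $t\nmid|g^G|$ for every $g$.
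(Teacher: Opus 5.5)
\textbf{The main step is missing.} Your Sylow-centre observation is correct, but the proposal stops where the real content of the lemma begins. The extremal choice you propose is vacuous. Every noncentral $q$-element with $q\neq p$ is a $p'$-element of prime power order, so $|\ce{G}{x}|=|G|/m$. Hence $|\ce{G}{x}|_r=|G|_r/m_r$ is the same for all of them, and there is no maximality to contradict. What your construction actually gives is harmless. If $y\in\ze{R_0}$ is noncentral, then $R_0\in\syl{r}{\ce{G}{y}}$ and $m$ divides $|G:\ce{G}{xy}|$, with no contradiction. As you note, the hypothesis says nothing about $xy$, which is exactly what Itô's minimal-centraliser argument (Lemma~\ref{Ito}) needs. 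The fallback to a minimal counterexample is not set up either. The hypothesis passes neither to subgroups nor to quotients: in $G/N$ the class sizes of the relevant images are only known to divide $m$. When $p\nmid|G|$, this step already says that a common class size of all noncentral elements of prime power order must be a prime power. That is a genuine theorem, not a routine counting exercise. The paper does not prove the lemma; it imports it from \cite{JS}.

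\textbf{The structural part is false as stated when $b=0$.} With $b=0$ allowed, the ``in particular'' clause fails, so no argument that ignores $q\mid m$ can work. Take $G=D_{15}$, the dihedral group of order $30$, and $p=2$. Every nontrivial element of odd prime power order is a rotation with class size $2$, so the hypothesis holds with $m=2=2^1q^0$. Yet $\ze{G}=1$ and $|G|=2\cdot 3\cdot 5$, so $G$ is not a $\{2,q\}$-group up to abelian direct factors.

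The failure occurs exactly at the $p$-elements: the reflections have class size $15$. That is the case your sketch leaves unresolved. Your claim that $\ce{G}{x}$ contains a ``Sylow $p$-complement-sized piece'' is also wrong. From $|x^G|=p^aq^b$ you only get full Sylow $s$-subgroups of $G$ inside $\ce{G}{x}$ for $s\notin\{p,q\}$. In any case, this says nothing about centralisers of $p$-elements, which is what you need to keep $t$ out of their class sizes.

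Your primary-decomposition step is invalid even for $p'$-elements. Knowing $t\nmid|g_i^G|$ for each component does not give $t\nmid|g^G|$. The reason is that $\ce{G}{g}=\bigcap_i\ce{G}{g_i}$, and the various $\ce{G}{g_i}$ may contain different Sylow $t$-subgroups.

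The paper only invokes the lemma when $m$ is not a prime power (Propositions~\ref{propB1} and~\ref{propB2}), so $b\geq 1$ there. A correct proof of the structural clause has to use that hypothesis.
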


Finally we state the following result regarding \emph{minimal} centralisers due to N. Itô. An elementary proof is included, for instance, in \cite[Lemma 2.6]{MS}.

\begin{lemma} \label{Ito}
    Let $G$ be a group. If $X$ is a centraliser that does not properly contain other centraliser, and there exists an $r$-element $g \in G$ whose centraliser is $X$, then $X = R \times A$ where $R$
    is a Sylow $r$-subgroup of $X$ and $A$ is an abelian $r'$-group.
\end{lemma}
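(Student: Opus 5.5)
The plan is to show that every $r'$-element of $X$ is central in $X$. The product decomposition then follows from elementary structure theory.

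Write $X=\ce{G}{g}$ with $g$ an $r$-element. Take an arbitrary $r'$-element $x\in X$. Since $x$ commutes with $g$ and $g,x$ have coprime orders, Lemma~\ref{basic}(b) gives
$$\ce{G}{gx}=\ce{G}{g}\cap\ce{G}{x}=X\cap \ce{G}{x}\leq X.$$
Now $\ce{G}{gx}$ is a centraliser contained in $X$, and $X$ properly contains no centraliser. Hence $\ce{G}{gx}=X$, which gives $X\leq \ce{G}{x}$. Thus $x\in\ze{X}$. This is the only place where minimality is used, and I expect it to be the key (though short) step.

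Let $A$ be the subgroup generated by all $r'$-elements of $X$. By the previous paragraph, $A\leq \ze{X}$, so $A$ is abelian. An abelian group generated by $r'$-elements is an $r'$-group, so $A$ is an abelian $r'$-group, normal in $X$. For any $y\in X$, write its primary decomposition as $y=y_ry_{r'}$, where $y_r$ is the $r$-part and $y_{r'}$ is the product of the remaining parts. Then $y_{r'}\in A$, so $yA=y_rA$ has $r$-power order. Hence $X/A$ is an $r$-group.

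Consequently $|X|=|R|\,|A|$ for any Sylow $r$-subgroup $R$ of $X$, and $R\cap A=1$, so $X=RA$. Since $A$ is central in $X$, this product is direct, giving $X=R\times A$ as claimed.
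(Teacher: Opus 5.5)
Your proof is correct and is essentially the standard elementary argument that the paper cites from \cite[Lemma 2.6]{MS} rather than reproving: minimality of $X$ together with Lemma~\ref{basic}(b) forces every $r'$-element of $X$ into $\ze{X}$. The remaining steps are also handled correctly: the $r'$-elements then generate a central abelian $r'$-subgroup $A$ with $X/A$ an $r$-group, hence $X=RA$ with $R\cap A=1$, and this product is direct because $A$ is central.
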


\section{Proofs of main results}

As mentioned in the introduction, we will prove Theorem \ref{theoA} as a consequence of some more general results. To formulate them, let us first introduce the following notation: let $cs_{pp}(G)$ be the set of distinct primes whose powers occur as class sizes in $G$, and denote by $cs_{npp}(G)$ the subset of $cs(G)$ consisting of those class sizes which are not prime powers.

\medskip

\begin{theorem}
\label{theoremB}
Let $G$ be a group. If $|cs_{npp}(G)|\geq 3$, then $|cs_{pp}(G)| \leq |cs_{npp}(G)|$.  
\end{theorem}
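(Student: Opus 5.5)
Write $cs_{pp}(G)=\{p_1,\dots,p_k\}$ and suppose, for a contradiction, that $k>n:=|cs_{npp}(G)|\geq 3$. The strategy is to produce, from the many primes in $cs_{pp}(G)$, more than $n$ distinct class sizes that are not prime powers. For each $i$ pick $x_i\in G$ with $|x_i^G|$ a nontrivial power of $p_i$. Using the primary decomposition and Lemma~\ref{basic}(b), $\ce{G}{x_i}$ is the intersection of the centralisers of the primary parts of $x_i$, so each such part has class size a power of $p_i$ (possibly $1$). Hence we may take $x_i$ to be a $t_i$-element for some prime $t_i$, still with $|x_i^G|$ a nontrivial $p_i$-power.

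\textbf{Step 1: commuting pairs give composite sizes.} For $i\neq j$, whenever $x_i$ and $x_j$ (or suitable conjugates) commute and have coprime orders, the final assertion of Lemma~\ref{basic} gives $|(x_ix_j)^G|=|x_i^G|\,|x_j^G|$. This is a non-prime-power class size divisible exactly by $p_i$ and $p_j$. Distinct unordered pairs $\{p_i,p_j\}$ then give distinct elements of $cs_{npp}(G)$. If $t_i=t_j$, Lemma~\ref{lemma_kazarin} shows that $x_ix_j$ cannot have prime-power class size unless $\langle x_i,x_j\rangle\le\rad{t}{G}$, and even then $|(x_ix_j)^G|$ is a power of $t$. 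So products of $t$-elements with different $p$'s tend to yield non-prime-power sizes as well. The goal is to realise at least $k$ (in fact many more) distinct pairs, which would exceed $n$.

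\textbf{Step 2: producing commuting pairs.} Here I would use minimal centralisers. Choose $x_i$ so that $\ce{G}{x_i}$ is minimal among centralisers of elements whose class size is a $p_i$-power. By Lemma~\ref{Ito} (if the centraliser is globally minimal), $\ce{G}{x_i}=R\times A$ with $A$ an abelian $t_i'$-group. Since $|G:\ce{G}{x_i}|$ is a $p_i$-power, $\ce{G}{x_i}$ contains a Sylow $q$-subgroup of $G$ for every $q\neq p_i$. So for $j\neq i$, some conjugate of the $p_j'$-part of a suitable element lies in $\ce{G}{x_i}$. Combined with Lemma~\ref{basic}(c) and Lemma~\ref{lemmaJS}, applied to $p'$-elements of prime-power order, this should allow a conjugate of $x_j$ (when $t_j\neq p_i$, after adjusting) to centralise $x_i$.

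When such a pair cannot be obtained, the class sizes split into coprime families. In that case I would invoke Lemma~\ref{lemma_CC} on three pairwise coprime sizes $p_1^{a},p_2^{b},p_3^{c}$ to get a class size larger than one of them and dividing a product. With three or more primes this forces composites of the shape $p_ip_j$-type. Theorem~\ref{disconnected} handles the degenerate situation where everything is a $\pi$- or $\pi'$-number, which would make $n=0$ and so contradicts $n\ge 3$.

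\textbf{Step 3: counting.} With $k\ge 4$ primes, I aim to show that the graph on $cs_{pp}(G)$, joining $p_i$ and $p_j$ when some non-prime-power class size is divisible by exactly those two primes, has at least $k$ edges. For instance, it suffices that every vertex has degree $\geq 2$. Since distinct edges correspond to distinct class sizes, this gives $n\ge k$, the desired contradiction. The small threshold $n\geq3$ enters because for $n\le 2$ the graph may be a path or star with fewer edges, consistent with the sharp bounds quoted in the introduction.

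The main obstacle is Step 2, guaranteeing that each prime $p_i$ is joined to at least two others. The difficulty is the case $t_i=p_j$, where $x_i$ is a $p_j$-element and cannot be multiplied coprimely with $x_j$. There I would first try Lemma~\ref{lemma_kazarin} to force $x_i,x_j\in\rad{t}{G}$, and then use Lemma~\ref{lemmaJS} to confine $G$, up to abelian factors, to a $\{p,q\}$-group. That would contradict the presence of at least four primes in $cs_{pp}(G)$.
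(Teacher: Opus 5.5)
There is a genuine gap, and it sits in the case you did not identify as the obstacle.

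\textbf{The missing case.} Your plan only covers the situation where the $x_i$ have pairwise coprime orders. There your Sylow observation already suffices. Since $|G:\ce{G}{x_k}|$ is a $p_k$-power, a conjugate of $x_i$ centralises $x_k$ whenever $t_i\neq p_k$. As $t_i$ equals at most one $p_k$, every vertex gets degree at least $k-2\geq 2$. So the case $t_i=p_j$, which you single out as the main obstacle, is harmless.

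The real difficulty is when several $x_i$ are $t$-elements for the \emph{same} prime $t$. Then the product formula of Lemma~\ref{basic} is unavailable, and your remark that such products ``tend to'' have non-prime-power sizes is not an argument. Two ingredients are missing.
\begin{itemize}
\item Lemma~\ref{basic}(a), which needs no commuting, gives that $|(x_ix_j)^G|$ divides $p_i^{a_i}p_j^{a_j}$. Hence a non-prime-power value has prime set exactly $\{p_i,p_j\}$, and distinct pairs give distinct class sizes. Without this, your count of distinct composite sizes has no basis.
\item Lemma~\ref{lemma_kazarin} forces each prime-power value $|(x_ix_j)^G|$ to equal $\max\{p_i^{a_i},p_j^{a_j}\}$ and to be a $t$-power. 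So every such pair contains the unique index $i_0$ with $p_{i_0}=t$.
\end{itemize}
If all $x_i$ are $t$-elements, the pairs avoiding $i_0$ then give $\binom{k-1}{2}$ distinct non-prime-power sizes. This is enough unless $n=3$ and $k=4$. That residual case, and the mixed cases where only some $x_i$ share a prime, need further case analysis; the paper splits into four cases according to how the $t_i$ coincide.

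In that residual configuration your degree target cannot be what produces the contradiction. Suppose $x_1,\dots,x_4$ are $p_1$-elements with $|(x_1x_j)^G|=p_1^{a_1}$ for $j=2,3,4$. Then the vertex $p_1$ receives no edge at all from these products. The paper instead shows that no prime-power-order $p_1'$-element has prime-power class size. It then multiplies $x_1$ by a non-central such element, obtaining a fourth non-prime-power class size with three prime divisors.

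\textbf{The Step 2 tools do not apply as stated.}
\begin{itemize}
\item Lemma~\ref{Ito} needs $\ce{G}{x_i}$ to be minimal among \emph{all} centralisers. It is not, as soon as Step 1 succeeds, since then $\ce{G}{x_ix_j}=\ce{G}{x_i}\cap\ce{G}{x_j}$ is strictly smaller.
\item Lemma~\ref{lemmaJS} requires every prime-power-order $p'$-element to have class size $1$ or one fixed $m$. This does not follow from $\langle x_i,x_j\rangle\leqslant\rad{t}{G}$, nor from anything else you establish.
\item The claim that failing to find commuting pairs makes ``the class sizes split into coprime families'' is unsupported.
\item Theorem~\ref{disconnected} gives $|cs(G)|=3$, not $n=0$.
\end{itemize}

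\textbf{What is worth keeping.} Your idea of using Lemma~\ref{lemma_CC} can be made precise. Apply it to prime-power class sizes $q_l<q_i<q_j$ of distinct primes. It shows that each $p_l$ is joined to all but at most one $p_m$ with $q_m>q_l$. This yields at least $\binom{k-1}{2}$ distinct two-prime class sizes, which disposes of everything except $n=3$, $k=4$ at once. That last case still requires a structural argument of the kind described above.
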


\begin{proof}
Set $n=|cs_{npp}(G)|\geq 3$. Suppose, arguing by contradiction, that there exist elements $x_1, x_2, \dots, x_{n+1} \in G$ such that for all $1\leq r \leq n+1$ it holds $|x_r^G|=p_r^{a_r}$ for certain pairwise different primes $p_r$, where $a_r$ are positive integers such that $p_r^{a_r-1}\notin cs(G)$. Set $R=\{1,...,n+1\}$. In virtue of Lemma~\ref{basic} (b) and the primary decomposition of each $x_r$, we may assume for every $r\in R$ that the order of $x_r$ is a power of some prime $t_r$.

We divide the proof in the following four cases.

\smallskip

\noindent \textbf{\underline{Case I:}} the elements $x_1,...,x_{n+1}$ have pairwise coprime orders.

\smallskip

In other words, we are supposing that the primes $t_r$ are pairwise different, for all $r\in R$. Fix $i\in R$. Since $t_i$ is equal to at most one of $\{p_{1},\dots, p_{i-1}, p_{i+1}, \dots, p_{n+1}\}$, we get by Lemma~\ref{basic} that up to conjugation $|(x_{i}x_{k})^G|=p_{i}^{a_i}p_{k}^{a_k}$ for every $k \in R \setminus \{i, i^*\}$, where, if necessary, $i^*$ is chosen such that $t_{i}=p_{i^*}\in \{p_{1},\dots, p_{i-1}, p_{i+1}, \dots, p_{n+1}\}$. Now fix $j\in R\setminus\{i\}$ arbitrary whenever $t_i\notin \{p_{1},\dots, p_{i-1}, p_{i+1}, \dots, p_{n+1}\}$, or $j=i^*$ if $t_{i}=p_{i^*}\in \{p_{1},\dots, p_{i-1}, p_{i+1}, \dots, p_{n+1}\}$. In the latter case we have thus built $n-1$ distinct class sizes of $G$ that are not prime powers, while in the former case we have built $n$. Arguing similarly, since $t_j$ is equal to at most one of the primes in $\{p_{1}, \dots, p_{j-1}, p_{j+1}, \dots, p_{n+1}\}$, we obtain conjugacy classes with sizes $p_{j}^{a_j}p_{s}^{a_s}$, for every $s \in R \setminus \{j, j^*\}$, where, if necessary, $j^*$ is chosen such that $t_{j}=p_{j^*}$. Observe that when $x_j=x_{i^*}$ all these class sizes $p_j^{a_j}p_s^{a_s}$ are distinct to the previous ones built, since $p_i^{a_i}p_k^{a_k}$ is never divisible by $p_{i^*}$ but $p_j^{a_j}p_s^{a_s}$ so is always; but this is a contradiction because we have therefore created $(n-1)+(n-1)=2n-2>n$ different class sizes of $G$, being none of them a prime power. On the other side, when $j\in R\setminus \{i\}$ is taken arbitrary, we obtain at least $n+(n-1)=2n-1>n$ different non-prime-power class sizes which is also not possible.

\smallskip

\noindent \textbf{\underline{Case II:}} all elements $x_1,...,x_{n+1}$ are $t$-elements, for a fixed prime $t$.

\smallskip

Since $\operatorname{gcd}(|x_{i}^G|, |x_{j}^G|)=1$, for every $i,j \in R$ with $i\neq j$, then by Lemma~\ref{basic} (a) one has that $1\neq |(x_{i}x_{j})^G|$ divides $|x_{i}^G|\cdot |x_{j}^G|=p_i^{a_i}p_j^{a_j}$. Recall that $p_i^{a_i}$ and $p_j^{a_j}$ are the smallest powers of $p_i$ and $p_j$, respectively, in $cs(G)$. Consequently $|(x_{i}x_{j})^G|$ is equal either to $p_i^{a_i}$, to $p_j^{a_j}$, or to a number divisible by both primes. Since there are $\frac{n(n+1)}{2}$ possible products $x_ix_j$, and by assumptions we have exactly $n$ non-prime-power numbers in $cs(G)$, then at least $\frac{n(n+1)}{2}-n = \frac{n(n-1)}{2}$ of these products $x_ix_j$ have prime power class size in $G$. In particular, if $n\geq 4$, then $\frac{n(n-1)}{2}>n$, so there are at least $n+1$ possible products $x_ix_j$ with prime power class size. Applying Lemma \ref{lemma_kazarin} all these class sizes are powers of the (fixed) prime $t$, and each one coincides with $\operatorname{max}\{|x_i^G|,|x_j^G|\}=\operatorname{max}\{p_i^{a_i}, p_j^{a_j}\}$, but this cannot occur because there are at least $n+1$ different products $x_ix_j$ (recall that $i,j\in R$ are different) and $|R|=n+1$. Thus we may affirm $n=3$, and by the previous argument we deduce that within the set $$\{|(x_{1}x_{2})^G|, |(x_{1}x_{3})^G|,|(x_{1}x_{4})^G|,  |(x_{2}x_{3})^G|, |(x_{2}x_{4})^G|, |(x_{3}x_{4})^G| \}$$ there are three prime powers (in fact, $t$-powers, by Lemma~\ref{lemma_kazarin}), and the remaining three integers are not prime powers. In order to avoid contradicting Lemma~\ref{lemma_kazarin}, all the previous products of elements that possess prime power class size must share a common term, say $x_{1}$ without loss of generality. In other words, we have
    \begin{equation*}
        |(x_{1}x_{2})^G|= |(x_{1}x_{3})^G|=|(x_{1}x_{4})^G| =p_{1}^{a_{1}}=t^{a_1},
    \end{equation*}
and the remaining ones satisfy $|\pi((x_2x_3)^G)|=|\pi((x_2x_4)^G)|=|\pi((x_3x_4)^G)|=2$.

We claim that, if $|g^G|$ is a prime power for some $g \in G \setminus Z(G)$, then the $q$-part $g_{q} \in Z(G)$ for every prime divisor $q\neq p_1$ of the order of $g$. Otherwise $|g_{q}^G|= |g^G|$ is prime power, and by Lemma~\ref{basic} (b) we get that both $|x_1^G|=p_1^{a_1}$ and $|g_q^G|$ divide $|(x_1g_q)^G|$. Since we already have $n=3$ non-prime-power numbers in $cs(G)$ and none of them is divisible by $p_1$, necessarily $\pi(g_q^G)=\{p_1\}$. Moreover, $q$ is necessarily different from one of $\{p_{2}, p_{3}, p_{4}\}$, so Lemma \ref{basic} yields a class size with $\pi((g_qx_l)^G)=\{p_1,p_l\}$ for some $l \in \{2,3,4\}$, which is a contradiction. 

As a consequence there is no $p_1'$-element in $G$ with prime power class size; equivalently the class size of every non-central $p_1'$-element $h\in G$ satisfies $\pi(h^G) \in \{\{p_2,p_3\},\{p_2,p_4\}, \{p_3,p_4\}\}$. Clearly there exists a non-central $p_1'$-element $h\in G$, as otherwise $G$ is a $p_1$-group up to abelian direct factors; therefore $x_1h=hx_1$ up to conjugation and $|\pi((x_1h)^G)|=3$, so we have more than $n$ class sizes that are not prime powers, a contradiction.

\smallskip

\noindent \textbf{\underline{Case III:}} all elements $x_1,...,x_n$ are $t$-elements, for a fixed prime $t$, but $x_{n+1}$ is not.

\smallskip

In other words, we are supposing that $t_1=\cdots = t_n=t$ but $t_{n+1}\neq t$. As $t_{n+1}$ is equal to at most one of $\{p_1, \dots, p_n\}$, we may assume that $t_{n+1} \notin \{p_1, \dots, p_{n-1}\}$ and so we get the non-prime-power class sizes $p_i^{a_i}p_{n+1}^{a_{n+1}}$ for all $i \in \{1, \dots, n-1\}$ by Lemma \ref{basic}. Now, since $|cs_{npp}(G)|=n$, if we consider $|(x_ux_v)^G|$ for any pair $u,v\in \{1,....,n\}$ with $u\neq v$, then there is at most one of this class sizes which is by both primes $p_u$ and $p_v$, and the remaining are equal to either $p_u^{a_u}$ or $p_v^{a_v}$. If $n>3$, then we can conclude that at least $\frac{n(n-1)}{2}-1>n-1$ of these class sizes $|(x_ux_v)^G|$ are prime powers. In virtue of Lemma \ref{lemma_kazarin}, all these powers are necessarily $t$-powers, and each one coincides with $\operatorname{max}\{|x_u^G|,|x_v^G|\}=\operatorname{max}\{p_u^{a_u}, p_v^{a_v}\}$, which is impossible because there are at least $n$ different products $x_ux_v$ and only $n$ elements in $\{x_1,...,x_n\}$.

Consequently $|cs_{npp}(G)|=3$, and by the above argument we can similarly deduce that $cs_{npp}(G)$ contains $\{ p_1^{a_1}p_4^{a_4},p_2^{a_2}p_4^{a_4} \}$, so at most one of $\{|(x_1x_2)^G|, \allowbreak |(x_1x_3)^G|, |(x_2x_3)^G|\}$ also belongs to $cs_{npp}(G)$; in fact, there is exactly one of them lying in $cs_{npp}(G)$, since otherwise we get a contradiction again with Lemma~\ref{lemma_kazarin}. Observe also that this last non-prime-power class size is not divisible by $p_4$ in any case. Now Lemma~\ref{lemma_kazarin} applied to the pair of class sizes inside $\{|(x_1x_2)^G|, \allowbreak |(x_1x_3)^G|, |(x_2x_3)^G|\}$ that are prime powers provides $t \in \{p_1, p_2, p_3\}$. Thus $t \neq p_4$, so up to conjugation $x_3x_4=x_4x_3$ and we get $|(x_3x_4)^G|=p_3^{a_3}p_4^{a_4}$, a contradiction.

\smallskip

\noindent \textbf{\underline{Case IV:}} all elements $x_1,...,x_s$ are $t$-elements, for a fixed prime $t$ and for some $1<s<n$, but $x_{s+1},...,x_{n+1}$ are not.

\smallskip

Set $I=\{1,...,s\}$ and $J=\{s+1,...,n+1\}$. We claim $t\in\{p_{s+1},...,p_{n+1}\}$. Arguing by contradiction, suppose the opposite, so up to conjugation we may affirm that $x_{i}x_{j}=x_{j}x_{i}$, for each $(i,j)\in I\times J$. By Lemma~\ref{basic} we get $|(x_{i}x_{j})^G|=p_{i}^{a_i}p_{j}^{a_j}$, so we have obtained $s(n-s+1)$ distinct non-prime-power class sizes. However, this number is strictly greater than $n$, since $s(n-s+1)>n$ is equivalent to $s^2-s(n+1)+n<0$, which holds if and only if $1<s<n$. This contradiction leads to $t\in\{p_{s+1},...,p_{n+1}\}$, so we may assume $t=p_{s+1}$. Hence, we can obtain (at least) $|I|(|J|-1)=s(n-s)$ different class sizes of $G$, with the form $p_i^{a_i}p_j^{a_j}$ for every $(i,j)\in I\times (J\setminus\{s+1\})$. Moreover, as $t_{s+1}$ is equal to at most one of $\{p_1,...,p_s\}$, then we get additional non-prime-power class sizes, which are $p_i^{a_i}p_{s+1}^{a_{s+1}}$, except possibly $p_{s+1}^{a_{s+1}}p_{i^*}^{a_i^*}$ for some $i^*\in I$ such that $t_{s+1}=p_{i^*}$. This gives us a total amount of, at least, $s(n-s)+(s-1)$ non-prime-power class sizes of $G$. Furthermore, the fact that $|(x_ux_v)^G|$ divides $p_u^{a_u}p_v^{a_v}$ for every pairwise distinct indices $u,v\in I$ forces that some of these numbers is equal to either $p_u^{a_u}$ or $p_v^{a_v}$; otherwise we have obtained an extra non-prime-power class size, which adds up a total of, at least, $s(n-s)+(s-1)+1$, which is larger than $n$ due to our assumption $1<s<n$, so this is not possible. Consequently, there is some pair $u\neq v$ of indices in $I$ such that $|(x_ux_v)^G|$ is a prime power, and Lemma \ref{lemma_kazarin} leads to the final contradiction that $\op{max}\{p_u^{a_u},p_v^{a_v}\}$ is a power of $t\in\{p_{s+1},...,p_{n+1}\}$.
\end{proof}

\bigskip

In view of the previous theorem, it is also natural to analyse the behaviour of $|cs_{pp}(G)|$ in terms of smaller values of $|cs_{npp}(G)|$. The extreme case when $cs_{npp}(G)=\emptyset$ was characterised by Chillag and Herzog in \cite[Theorem 2]{CH}, and in particular they showed $|cs_{pp}(G)|\leq 2$. 

The next case to study is when $|cs_{npp}(G)|=1$, which has not been analysed yet, up to our knowledge. Nevertheless, in that situation it is verified the so-called \emph{one-prime power hypothesis}, that is, the greater common divisor between each pair of class sizes of $G$ is a prime power; thus \cite[Proposition 3.2]{CC} ensures that $G$ is soluble, although no information is known about $|cs_{pp}(G)|$. We demonstrate below that this number can indeed be bounded by $2$, and we provide some features of the group when the bound in attained.

\medskip

\begin{proposition}
\label{propB1}
If $G$ is a group with $cs_{npp}(G)=\{m\}$, then $|cs_{pp}(G)| \leq 2$. Moreover, if $cs_{pp}(G)=\{p,q\}$ for some primes $p\neq q$, then $\pi(m)=\{p,q\}$, and so $G$ is a $\{p,q\}$-group up to abelian direct factors.
\end{proposition}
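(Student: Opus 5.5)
Assume for a contradiction that there are three elements $x_1,x_2,x_3$ with $|x_r^G|=p_r^{a_r}$ for pairwise distinct primes $p_1,p_2,p_3$, with $p_r^{a_r}$ minimal among the $p_r$-powers in $cs(G)$. As in the proof of Theorem~\ref{theoremB}, Lemma~\ref{basic}~(b) and primary decomposition let us take each $x_r$ to be a $t_r$-element. The aim is to produce two distinct non-prime-power class sizes, which contradicts $cs_{npp}(G)=\{m\}$. The key observation is this: whenever two of these elements, $x_i$ and $x_j$, commute up to conjugation, Lemma~\ref{basic} gives $p_i^{a_i}p_j^{a_j}\in cs(G)$, which must then equal $m$. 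Since $m$ is a single number, at most one pair $\{i,j\}$ can be of this form. I would split into cases according to the primes $t_r$, mirroring Cases I--IV of Theorem~\ref{theoremB}.

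\textbf{The $t_r$ are pairwise distinct.} Each $t_i$ coincides with at most one $p_k$ ($k\neq i$), so it is a Sylow obstruction for commuting with only one other element. For three elements we can therefore find two different pairs with commuting conjugates, giving two distinct products $p_i^{a_i}p_j^{a_j}$, which is a contradiction. The same counting works when exactly two of the $t_r$ coincide, say $t_1=t_2=t\neq t_3$. Then $x_3$ commutes up to conjugation with whichever of $x_1,x_2$ has $p_i\neq t_3$. Also, $x_1x_2$ has class size dividing $p_1^{a_1}p_2^{a_2}$ by Lemma~\ref{basic}~(a), so either it equals $m$ (a second non-prime-power value, since $m$ would then have to be both $p_1^{a_1}p_2^{a_2}$ and something involving $p_3$), or it is a prime power. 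In the latter case Lemma~\ref{lemma_kazarin} forces $t\in\{p_1,p_2\}$, and then $x_3$ commutes with both, giving two distinct sizes.

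\textbf{All $t_r=t$.} Apply Lemma~\ref{basic}~(a) to each pair. At most one of the three products $x_ix_j$ has class size $m$, so at least two have prime power size. By Lemma~\ref{lemma_kazarin} each of these is $\max\{p_i^{a_i},p_j^{a_j}\}$ and a power of $t$. Two such pairs share an index, and the only consistent configuration is $t=p_1$ with $|(x_1x_2)^G|=|(x_1x_3)^G|=p_1^{a_1}$. Then, exactly as in Case II of Theorem~\ref{theoremB}, I would show that every non-central $p_1'$-element of prime power order has class size $m$, with $p_1\nmid m$. Such an element commutes with a conjugate of $x_1$, yielding a class size divisible by $p_1$ and by the primes of $m$, hence a second non-prime-power size. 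This is a contradiction, unless no such element exists, in which case $G$ is a $p_1$-group up to abelian factors, contradicting $p_2\in cs_{pp}(G)$.

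\textbf{The second assertion.} Let $cs_{pp}(G)=\{p,q\}$ and take a $t$-element $x$ with $|x^G|=p^a$ and an $s$-element $y$ with $|y^G|=q^b$. If $s\neq p$ or $t\neq q$, a suitable commuting pair gives $m=p^aq^b$. Otherwise, combining Lemma~\ref{basic}~(a) for $xy$ with Lemma~\ref{lemma_kazarin} again forces $\pi(m)\supseteq\{p,q\}$. To exclude any further prime $r\in\pi(m)$, I would use that an $r$-element has class size either a power of $p$ or $q$, or $m$. Using Lemma~\ref{lemmaJS} applied with the prime $p$ (every prime power order $p'$-element has size $1$ or a fixed value), I would conclude that $G$ is a $\{p,q\}$-group up to abelian direct factors, and then Lemma~\ref{basic}~(c) gives $\pi(m)=\{p,q\}$. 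I expect this last step to be the main obstacle: verifying the hypothesis of Lemma~\ref{lemmaJS}, namely that $p'$-elements cannot have class size a power of $q$ different from $m$, needs a careful use of the minimality of the powers and of Lemma~\ref{Ito}.
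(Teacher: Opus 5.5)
Your proof of the first assertion is correct, and it takes a different route from the paper. The paper orders $p^a<q^b<r^c$ and uses Lemma~\ref{lemma_CC} together with Lemma~2 of \cite{CCnil} to pin down the primes $t_r$. You instead run the case analysis of Theorem~\ref{theoremB} with $n=1$, and each case closes:
when $t_1=t_2\neq t_3$, Lemma~\ref{lemma_kazarin} gives $t\in\{p_1,p_2\}$, so $t\neq p_3$ and both $p_1^{a_1}p_3^{a_3}$ and $p_2^{a_2}p_3^{a_3}$ occur;
when all $t_r$ coincide, you correctly reach $t=p_1$ and $\pi(m)=\{p_2,p_3\}$, and Case~II transfers verbatim.
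This is more self-contained than the paper's argument. The genuine gap is the second assertion, which you have not proved.

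Your first step there is broken in three ways. If $s=t$, a conjugate of $y$ does centralise $x$, but $x$ and $y$ then have non-coprime orders, so Lemma~\ref{basic} gives no formula for $|(xy)^G|$. In your ``otherwise'' case ($s=p$, $t=q$), Lemma~\ref{lemma_kazarin} does not apply, because $x$ and $y$ are elements for different primes. Where it does apply ($s=t$), it only says that $|(xy)^G|$ is a $t$-power with $t\in\{p,q\}$, which tells you nothing about $m$. In any case this step is not needed: $m$ is not a prime power, so $\pi(m)=\{p,q\}$ follows as soon as $\pi(m)\subseteq\{p,q\}$.

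Everything therefore rests on excluding a prime $r\in\pi(m)\setminus\{p,q\}$. You only announce this step, and your plan cannot work as formulated. The hypothesis you propose to verify, that prime-power-order $p'$-elements have class size $1$ or $m$, is false in general. In the paper's group with $cs(G)=\{1,2,3,6,27\}$ (so $m=6$), the $3$-elements $a_2$, $b$, $a_1$ have class sizes $2$, $3$, $6$, and the involutions have class size $27$. So it fails for both primes, and nothing in your sketch identifies a prime for which some other fixed value would work.

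The missing idea is to argue by contradiction and use $r$ itself. The subcases where $x$ or $y$ is an $r$-element are immediate, so assume both are $r'$-elements.
\begin{enumerate}
\item A non-central $r$-element $a$ centralising (conjugates of) $x$ and $y$ must have $|a^G|=m$. Otherwise a $p$- or $q$-power size would make $|(ya)^G|$ or $|(xa)^G|$ a non-prime-power with prime set $\{p,q\}$.
\item Hence $\ce{G}{a}$ is a minimal centraliser, and Lemma~\ref{Ito} gives $\ce{G}{a}=R_0\times A$ with $A$ abelian.
\item This forces $x$ and $y$ to be $t$-elements for a common prime $t$; otherwise they commute inside $A$ and $m=p^aq^b$. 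Lemma~\ref{lemma_kazarin} then yields $t\in\{p,q\}$.
\item A $t'$-element of prime power order with prime-power class size turns out to be a $p$- or $q$-element whose class size is coprime to $r$. So a conjugate of it lies in $A$ and commutes with $x$ and $y$, again giving a non-prime-power class size with prime set $\{p,q\}$.
\item Thus every prime-power-order $t'$-element has class size $1$ or $m$. Only now does Lemma~\ref{lemmaJS}, applied with the prime $t$ rather than an arbitrary $p$, give $|\pi(G/\ze{G})|\leq 2$, contradicting $p,q,r\in\pi(G/\ze{G})$.
\end{enumerate}
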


\begin{proof}
If $cs_{pp}(G)=\{p,q\}$ and $cs_{npp}(G)=\{m\}$ with $\pi(m)=\{p,q\}$, then by Lemma~\ref{basic}~(c) one can easily see that $G$ is a $\{p,q\}$-group up to abelian direct factors.

For the first assertion, and arguing by contradiction, let us suppose that $\{p, q, r\} \subseteq cs_{pp}(G)$ for pairwise distinct primes $p,q,r$. Hence there exist $x,y,z\in G$ such that $|x^G|=p^a$, $|y^G|=q^b$ and $|z^G|=r^c$, being these the smaller powers of $p$, $q$ and $r$ appearing in $cs(G)$, respectively. By the primary decomposition, we can take $x,y,z$ as elements with prime power orders. Without loss of generality, assume $p^a<q^b<r^c$. Hence by Lemma~\ref{lemma_CC} there exists a class size $|g^G|$ such that, either $|g^G|>q^b$ and $|g^G|$ divides $p^aq^b$, or $|g^G|>r^c$ and $|g^G|$ divides $p^ar^c$. Certainly, in the former case $\pi(g^G)=\{p,q\}$, whilst in the latter case $\pi(g^G)=\{p,r\}$; and in both cases $cs_{npp}(G)=\{|g^G|\}$. 

We claim that $\pi(g^G)=\{p,r\}$ cannot occur. Otherwise, since $|cs_{npp}(G)|=1$ by assumptions, then there is no class size in $G$ divisible by $p$ and $q$, and \cite[Lemma 2]{CCnil} forces that $x$ is an $q$-element. Similarly, since there is no class size in $G$ divisible by $q$ and $r$, then $y$ is an $r$-element by the same lemma. Therefore $yx=xy$ up to conjugation, and by Lemma~\ref{basic} we obtain the contradiction $|(xy)^G|=p^aq^b\in cs_{npp}(G)$. 

Hence necessarily $\pi(g^G)=\{p,q\}$, and so there is no class size in $G$ divisible by $pr$ or $qr$. In this case \cite[Lemma 2]{CCnil} yields that $x$ and $y$ are $r$-elements. If $z$ is not an $r$-element, then up to conjugation $z$ commutes with either $x$ or $y$, and so we would get in $cs_{npp}(G)$ either $|(xz)^G|=p^ar^c$ or $|(yz)^G|=q^br^c$, a contradiction. Thus $z$ is also an $r$-element.

We claim that, if $|h^G|$ is prime power for some $h \in G \setminus Z(G)$, then the $l$-part $h_{l} \in Z(G)$ for every prime divisor $l\neq r$ of the order of $h$. Otherwise $|h_{l}^G|$ divides $|h^G|$, so it is prime power, and by Lemma~\ref{basic} we have that $|z^G|=r^c$ and $|h_l^G|$ divide $|(zh_l)^G|$. Since the unique non-prime-power class size of $G$ is $g^G$ and $\pi(g^G)=\{p,q\}$, then $|h_l^G|$ is a non-trivial $r$-power. Moreover, $l$ is different from one of $\{p,q\}$, so Lemma~\ref{basic} yields either $\pi((xh_l)^G)=\{p,r\}$ or $\pi((yh_l)^G)=\{q,r\}$, which is impossible.

As a consequence we may affirm that there is no prime power order $r'$-element in $G$ with prime power class size. Clearly there exists a non-central prime power order $r'$-element $h\in G$, as otherwise $G$ is a $r$-group up to abelian direct factors. It follows that $|h^G|=m=|g^G|$, and $zh=hz$ up to conjugation, so $|(zh)^G|$ is another non-prime-power class size with $\pi((zh)^G)=\{p,q,r\}$. This final contradiction proves that $|cs_{pp}(G)| \leq 2$, as desired.

Next we demonstrate that if $cs_{npp}(G)=\{m\}$ and $|cs_{pp}(G)|=2$, then $\pi(m)= cs_{pp}(G)$. Set $cs_{pp}(G)=\{p,q\}$, and let $p^a$ and $q^b$ be the smallest powers of $p$ and $q$, respectively, occurring in $cs(G)$. Suppose by contradiction that $m$ is not a $\{p,q\}$-number or, equivalently, that there exists a prime $r \in \pi(m) \setminus \{p,q\}$. By assumption, there exists $x,y \in G$ such that $|x^G|=p^a$ and $|y^G|=q^b$. In virtue of Lemma~\ref{basic} and the primary decomposition, we may assume that $x$ is a $t_1$-element and $y$ is a $t_2$-element, for some primes $t_1,t_2$.

We claim that $t_1=t_2$. Otherwise the elements $x$ and $y$ have coprime orders. Let $R\in\syl{r}{G}$ such that, up to conjugation, $R\leqslant\ce{G}{x}\cap\ce{G}{y}$. By Lemma~\ref{basic}~(c) it holds $R\nleqslant \ze{G}$, so we can take $g\in R\setminus \ze{G}$. Hence $|g^G|$ divides both $|(xg)^G|$ and $|(yg)^G|$, and so if $\pi(g^G)=\{p\}$ or $\pi(g^G)=\{q\}$, then we get the contradiction $\pi(m)=\{p,q\}$. As a consequence $|g^G|=m$, and thus $\ce{G}{g}$ does not properly contain any other centraliser. As $g$ is an $r$-element, by Lemma~\ref{Ito} we get $\ce{G}{g}=R_0\times A$ where $R_0\in\syl{r}{\ce{G}{g}}$ and $A$ is an abelian group. Recall that $x,y\in\ce{G}{g}$, so they necessarily lie in $A$. In particular, $xy=yx$ and so $\pi(m)=\pi((xy)^G)=\{p,q\}$, a contradiction.

Therefore the elements $x$ and $y$ are $t$-elements, for a fixed prime $t$. By Lemma~\ref{basic} one has that $|(xy)^G|$ divides $|x^G|\cdot|y^G|=p^aq^b$. Since $cs_{npp}(G)=\{m\}$ and we are supposing that $m$ is not a $\{p,q\}$-number, then $|(xy)^G|$ should be a prime power, and Lemma~\ref{lemma_kazarin} ensures that $|(xy)^G|=\operatorname{max}\{|x^G|,|y^G|\}$ is a power of $t$, so $t\in\{p,q\}$.

Let $s\neq t$ be a prime. We claim that the class size of each non-central $s$-element $g\in G$ is either equal to $m$, equal to a $q$-power with $s=p$ and $t=q$, or equal to a $p$-power with $s=q$ and $t=p$. Suppose first that $t=q$, so up to conjugation $gy=yg$, and then both $|g^G|$ and $|y^G|$ divide $|(gy)^G|$. Since we assuming $\pi(m)\neq \{p,q\}$, then $|g^G|=m$ or $|g^G|$ is a $q$-power; moreover, if $|g^G|$ is a $q$-power, then necessarily $s=p$ because otherwise $\pi((xg)^G)=\{p,q\}$, a contradiction. An analogous argument applies if $t=p$. 

Recall that $r\in\pi(m)\setminus\{p,q\}$, so we can pick some non-central element $a\in R\in\syl{r}{G}$. Up to conjugation $a\in\ce{G}{x}\cap\ce{G}{y}$. Since we showed that $t$ lies in $\{p,q\}$, then $|a^G|$ divides both $|(ax)^G|$ and $|(ay)^G|$. Consequently $|a^G|=m$, and by Lemma~\ref{Ito} we deduce $\ce{G}{a}=R_0\times A$ with $R_0\in\syl{r}{\ce{G}{a}}$ and $A$ an abelian group. In virtue of the previous paragraph, if $|g^G|$ is either a $q$-power with $g$ a $p$-element (and then $t=q$), or $|g^G|$ is a $p$-power with $g$ a $q$-element (and then $t=p$), then up to conjugation $g\in \ce{G}{a}$ in both cases, so $g\in A$. Thus $g$ commutes with $x$ and $y$ and so in any case we can construct a non-prime-power class size whose prime divisors are only $p$ and $q$, a contradiction. As a consequence, every prime power order $t'$-element of $G$ has either $1$ or $m$ conjugates. By Lemma~\ref{lemmaJS}, we get that $|G/\ze{G}|$ is divisible by at most two different primes, which clearly cannot occur. This final contradiction finishes the proof.
\end{proof}

\bigskip

Next we move a step further and, using similar techniques, we give a bound for $|cs_{pp}(G)|$ in the remaining case when $|cs_{npp}(G)|=2$.

\medskip

\begin{proposition}
\label{propB2}
If $G$ is a group with $cs_{npp}(G)=\{m,n\}$, then $|cs_{pp}(G)| \leq 3$. Moreover, if $cs_{pp}(G)=\{p,q,r\}$ for some pairwise different primes $p, q, r$, then $\pi(m)\cup\pi(n)\subseteq\{p,q,r\}$, and so $G$ is a $\{p,q,r\}$-group up to abelian direct factors.
\end{proposition}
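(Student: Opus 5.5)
We argue by contradiction, following the pattern of Theorem~\ref{theoremB} and Proposition~\ref{propB1}. Suppose $\{p_1,p_2,p_3,p_4\}\subseteq cs_{pp}(G)$ for pairwise distinct primes. Choose $x_1,\dots,x_4$ with $|x_r^G|=p_r^{a_r}$ the smallest power of $p_r$ in $cs(G)$. By Lemma~\ref{basic}~(b) and the primary decomposition, each $x_r$ may be taken to be a $t_r$-element. We then split into the same four cases as in the proof of Theorem~\ref{theoremB}, according to how many of the $t_r$ coincide, and count non-prime-power class sizes, aiming to exceed two. The counting is easier here since $n=2$, but some of the inequalities of Theorem~\ref{theoremB}, such as $2n-2>n$, fail for $n=2$. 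In those cases we supplement the count with Lemma~\ref{lemma_kazarin}.

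\textbf{Pairwise coprime orders.} Each $t_i$ equals at most one $p_j$. Lemma~\ref{basic} then gives at least two sizes $p_i^{a_i}p_k^{a_k}$ for each $i$, and choosing two indices $i,j$ yields at least three distinct such products, a contradiction.

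\textbf{All $x_r$ are $t$-elements.} Each $|(x_ix_j)^G|$ divides $p_i^{a_i}p_j^{a_j}$. Of the six products, at most two have non-prime-power size, so at least four have prime power size. By Lemma~\ref{lemma_kazarin} each of these equals $\max\{p_i^{a_i},p_j^{a_j}\}$ and is a power of $t$. Since at most one $p_i$ equals $t$, all these pairs contain a common index, but only three pairs contain a fixed index, contradicting the count of four.

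\textbf{Mixed cases.} Here some $x_r$ are $t$-elements and the rest are not. Elements of coprime order commute up to conjugation, so by Lemma~\ref{basic} they produce products $p_i^{a_i}p_j^{a_j}$. These, combined with the Kazarin constraint on pairs of $t$-elements, should again force at least three non-prime-power sizes. We argue as in Cases III and IV of Theorem~\ref{theoremB}, and where the count is short by one we use the device from Proposition~\ref{propB1}. If $x_u x_v$ has prime power size, then $t\in\{p_u,p_v\}$. This lets us exhibit a further commuting pair of coprime-order elements with a new product size.

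For the second assertion, assume $cs_{pp}(G)=\{p,q,r\}$ and that some prime $s\in\pi(m)\cup\pi(n)$ lies outside $\{p,q,r\}$. We mimic the last part of the proof of Proposition~\ref{propB1}. Take a non-central $s$-element $a$ lying, up to conjugation, in the centralisers of the relevant $x_i$; this is possible since $s$ differs from the primes $t_i$. Its class size divides several products, so it must be a non-prime-power class size containing $s$, and the corresponding centraliser is then minimal. Lemma~\ref{Ito} gives $\ce{G}{a}=S_0\times A$ with $A$ abelian, so the $x_i$ commute pairwise. This produces products whose sizes are $\{p,q,r\}$-numbers, which together with $m$ or $n$ give too many non-prime-power sizes. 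Finally, Lemma~\ref{basic}~(c) yields that $G$ is a $\{p,q,r\}$-group up to abelian direct factors.

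I expect the main obstacle to be the mixed cases with $n=2$, where the direct count falls short. There one must combine the Kazarin lemma with the claim, used in the earlier proofs, that $p_1'$-parts of elements with prime power class size are central. Lemma~\ref{lemmaJS} is then applied to conclude that $|G/\ze{G}|$ has too few prime divisors.
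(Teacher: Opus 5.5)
\textbf{First assertion.} Your argument takes a different route from the paper, and it can be completed. The pairwise-coprime and single-prime cases are correct as written. The mixed cases are easy and need none of the heavier tools you mention at the end:
\begin{itemize}
\item For the patterns $\{2,2\}$ and $\{2,1,1\}$ of the primes $t_r$, at most one cross pair fails to commute up to conjugation. This gives three distinct sizes $p_i^{a_i}p_j^{a_j}$.
\item For the pattern $\{3,1\}$, either $t\neq p_4$, which again gives three such sizes, or $t=p_4$. In the latter case both non-prime-power sizes are divisible by $p_4$. Hence $|(x_1x_2)^G|$ is a prime power, and Lemma~\ref{lemma_kazarin} forces $t\in\{p_1,p_2\}$, a contradiction.
\end{itemize}
The paper instead applies Lemma~\ref{lemma_CC} to two triples among the four prime powers. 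This gives $\pi(m)=\{p_1,p_i\}$ and $\pi(n)=\{p_2,p_j\}$, and Theorem~\ref{disconnected} then forces $|cs(G)|=3$. That is shorter but relies on a structural theorem.

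\textbf{Second assertion: this is where the genuine gap is.} When the $t_i$ are not all equal, no auxiliary $s$-element is needed. Coprime-order pairs commute up to conjugation and, with Lemma~\ref{lemma_kazarin}, directly show $m$ and $n$ are $\{p,q,r\}$-numbers. If all $t_i$ equal $s$, Lemma~\ref{lemma_kazarin} gives an immediate contradiction.

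The only substantive case is the one the paper isolates: $x,y,z$ are all $r$-elements, $|(xy)^G|=m$ with $\pi(m)=\{p,q\}$, and $|(xz)^G|=|(yz)^G|$ is a power of $r$. Here your mechanism produces nothing. Placing $x,y,z$ in the abelian factor $A$ of $\ce{G}{a}$ makes them commute. But commuting elements of the same prime order get no multiplicativity from Lemma~\ref{basic}, and the sizes $|(x_ix_j)^G|$ are already known to be $m$ and an $r$-power.

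Two further steps are unjustified once there are two non-prime-power sizes. First, nothing forces $s$ to divide $|a^G|$. Second, $\ce{G}{a}$ need not be minimal: if $|a^G|$ properly divides the other non-prime-power size, Lemma~\ref{Ito} does not apply.

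The missing argument is the paper's, and it runs as follows:
\begin{enumerate}
\item A non-central $r'$-part of an element with prime-power class size already exhibits $n$ as a $\{p,r\}$- or $\{q,r\}$-number. So one may assume every non-central prime-power-order $r'$-element has class size $m$ or $n$.
\item A non-central $s$-element $g$ must then have $|g^G|=n$, since $|g^G|=m$ would give $n=|(gz)^G|=mr^c$. Because $s\mid n$ and $s\nmid m$, $\ce{G}{g}$ is minimal, and Lemma~\ref{Ito} gives $\ce{G}{g}=S_0\times A$.
\item Lemma~\ref{lemmaJS} supplies a prime-power-order $r'$-element $h$ with $|h^G|=m$; otherwise $|\pi(G/\ze{G})|\le 2$.
\item Up to conjugation both $h$ and $z$ lie in $A$, so they commute. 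Then $|(hz)^G|=mr^c=n$, giving $\pi(n)=\{p,q,r\}$, a contradiction.
\end{enumerate}
The key is to pair an element of size $m$ with $z$, not to pair the $x_i$ with each other.
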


\begin{proof}
We first demonstrate that $cs_{pp}(G)$ cannot have four elements whenever $|cs_{npp}(G)|=2$. For that aim, arguing by contradiction, let us suppose that there exist prime powers $p_1^{a_1}<p_2^{a_2}<p_3^{a_3}<p_4^{a_4}$ in $cs(G)$, for some pairwise different primes $p_i$ and some positive integers $a_i$. Hence we are in situation to apply Lemma~\ref{lemma_CC} to $p_1^{a_1}<p_2^{a_2}<p_3^{a_3}$ which provides a class size $m\in cs(G)$ that divides $p_1^{a_1}p_i^{a_i}$, for some $i\in \{2,3\}$, with $m>p_i^{a_i}$. Thus $\pi(m)=\{p_1,p_i\}$ necessarily. The same argument applied to $p_2^{a_2}<p_3^{a_3}<p_4^{a_4}$ yields a class size $n$ such that $\pi(n)=\{p_2,p_j\}$ for some $j\in \{3,4\}$. In particular, due to our assumptions, $cs_{npp}(G)=\{m,n\}$. If $\gcd(m,n)=1$ (\emph{i.e.} $i=3$ and $j=4$), then we can apply Theorem~\ref{disconnected} since all class sizes of $G$ are either $\{p_1,p_3\}$-numbers or $\{p_1,p_3\}'$-numbers, and we get the contradiction $|cs(G)|=3$. On the other hand, if $\gcd(m,n)>1$ (\emph{i.e.} they are both divisible by either $p_2$ or $p_3$), then we also get a contradiction due to Theorem~\ref{disconnected} because in this case all class sizes are either $\{p_1,p_2,p_j\}$-numbers or $\{p_1,p_2,p_j\}'$-numbers.

Let us now prove that if $cs_{pp}(G)=\{p,q,r\}$, then $\pi(m)\cup\pi(n)\subseteq\{p,q,r\}$. Let $p^a,q^b,r^c$ be the smallest powers of $p,q,r$, respectively, occurring in $cs(G)$. We may then suppose that $|x^G|= p^a, |y^G|= q^b, |z^G|= r^c$ where $x$ is a $t_1$-element, $y$ is a $t_2$-element and $z$ is a $t_3$-element, for certain primes $t_1, t_2,t_3$. We first claim that the thesis about $m$ and $n$ holds whenever the primes $t_i$ are pairwise distinct. In fact, since $t_1$ is different from at least one of $\{q,r\}$, say $t_1 \neq q$, then up to conjugation $xy=yx$, and by Lemma~\ref{basic} we get $|(xy)^G|= p^aq^b= m$. Arguing similarly, since $t_3$ is different from at least one of $\{p,q\}$, then we obtain a conjugacy class with size either $p^ar^c = n$ or $q^br^c = n$, as desired. 

Hence we may suppose, without loss of generality, that $t_1 = t_2 = t$ for some prime $t$. Now note that if $t \neq t_3$, since $t_3$ is different from one of $\{p,q\}$, then we obtain that either $|(xz)^G|= p^ar^c$ or $|(yz)^G|= q^br^c$. In particular, if $t_3$ is different from both $p$ and $q$, then we are done. Therefore, we have the following two cases to discuss: either $x,y,z$ are $t$-elements for some fixed prime $t$, or $x$ and $y$ are $t$-elements and $z$ is, say, a $q$-element with $t\neq q$. In the latter case, up to conjugation, $x$ and $z$ commute so it follows $|(xz)^G|= p^ar^c = m$. If moreover $t \neq r$, then we can obtain the class size $|(yz)^G| = q^br^c = n$, and we are done. So we may suppose that both $x$ and $y$ are $r$-elements. We can see that $|(xy)^G|$ divides $|x^G|\cdot |y^G|= p^aq^b$ by Lemma~\ref{basic}, so either $|(xy)^G|=p^a$, $|(xy)^G|=q^b$, or $|(xy)^G|=n$ and $\pi(n)=\{p,q\}$. However, the first two cases cannot occur, otherwise $|(xy)^G|= \max \{p^a,q^b\}$ would be a power of $r$ by Lemma~\ref{lemma_kazarin}, which is a contradiction. Thus $\pi(n)=\{p,q\}$ as claimed.

Thus we may assume hereafter that $x,y,z$ are $t$-elements for some prime $t$. Observe that, by Lemma~\ref{basic}, each element within $\{|(xy)^G|,|(yz)^G|,|(xz)^G|\}$ is either a prime power or divisible by two primes in $\{p,q,r\}$. Since $cs_{npp}(G)=\{m,n\}$ by assumptions, then certainly some of them should be a prime power. Further, they cannot all be prime powers, because in that case Lemma~\ref{lemma_kazarin} yields that $|(xy)^G|= \max \{p^a,q^b\}, |(yz)^G|= \max \{q^b,r^c\}$ and $|(xz)^G|= \max \{p^a,r^c\}$ are powers of $t$, which is impossible. Additionally, if only two of $\{|(xy)^G|,|(yz)^G|,|(xz)^G|\}$ lie in $cs_{npp}(G)$, then we are done. Consequently, it is enough to focus on the situation where two of them are prime powers and the other is not. By symmetry, let us suppose that $|(xy)^G|= m$, so $\pi(m)=\{p,q\}$, and that both $|(yz)^G|$ and $|(xz)^G|$ are prime powers. Then, Lemma~\ref{lemma_kazarin} forces that $|(yz)^G|= \max \{q^b,r^c\} = \max \{p^a,r^c\}= |(xz)^G|$ is a power of $t$. Hence $t=r$ and in particular $x,y,z$ are $r$-elements.

We claim that if $|h^G|$ is a prime power for some $h \in G \setminus \ze{G}$, then we may assume that the $l$-part $h_l \in \ze{G}$ for every $l \neq r$. Otherwise $|h_l^G|$ divides $|h^G|$, which is a prime power. If $|h_l^G|$ is a $p$-power or a $q$-power, then $hz= zh$ up to conjugation, so we can produce a conjugacy class size, which should indeed be $n$, divisible by either $pr$ or $qr$, as wanted. On the other hand, if $|h_l^G|$ is an $r$-power, since $l$ is different from either $p$ or $q$, it is easy to see that then we can also obtain a class size divisible by either $pr$ or $qr$, and this class size is in fact $n$, as desired.

So we may assume that there is no $r'$-element of prime power order in $G$ with prime power class size. Recall we have already showed $\pi(m)=\{p,q\}$. Let us suppose, arguing by contradiction, that there exists a prime $s\in\pi(n)\setminus\{p,q,r\}$. Therefore we can pick a non-central element $g\in S\in\syl{s}{G}$. Since $g$ is a prime power order $r'$-element of $G$, then its class size is either $m$ or $n$. If $|g^G|=m$, as $\pi(m)=\{p,q\}$, then up to conjugation $gz=zg$ and thus $n=|(gz)^G|$ with $\pi((gz)^G)=\{p,q,r\}$ by Lemma~\ref{basic}, a contradiction. It follows that $|g^G|=n$ and Lemma~\ref{Ito} leads to $\ce{G}{g}=S_0\times A$ with $S_0\in\syl{s}{\ce{G}{g}}$ and $A$ an abelian group. If $m$ does not occur as a class size of any prime power order $r'$-element of $G$, then Lemma~\ref{lemmaJS} leads to $|\pi(G/\ze{G})|\leq 2$ which is impossible. So there must exist a prime power order $r'$-element $h\in G$ with $|h^G|=m$. As $s\notin\pi(m)$ then up to conjugation we may assume $g\in\ce{G}{h}$, so $h\in A$. Moreover, as $|z^G|=r^c$ is not divisible by $s$, up to conjugation $z$ also lies in $A$. Consequently $zh=hz$ and so $|(zh)^G|=n$ with $\pi(n)=\{p,q,r\}$. This final contradiction ends the proof, since the last claim regarding the prime divisors of $G/\ze{G}$ immediately follows from Lemma~\ref{basic} (c).
\end{proof}

\bigskip

It is straightforward to see that Theorem~\ref{theoB} can be derived from Theorem~\ref{theoremB}, and Propositions~\ref{propB1} and \ref{propB2}. 

\begin{example}
\label{ex}
It is not difficult to find groups for which the bound presented in Theorem~\ref{theoremB} is attained with equality. For instance, if $A$ and $B$ are groups with $cs(A)=\{1,p,q\}$ and $cs(B)=\{1,p,r\}$, respectively, for pairwise different primes $p,q,r$, then $G=A\times B$ has $cs(G)=\{1,p,q,r,p^2,pq,pr,qr\}$, so $cs_{pp}(G)=\{p,q,r\}$ and $cs_{npp}(G)=\{pq,pr,qr\}$.

To find a group that verifies the hypotheses of Proposition~\ref{propB1} take, for instance, the following one. Consider a fixed-point-free action of a cyclic group $A=\langle a\rangle$ of order $5$ on an elementary abelian group $B=\langle b_1,b_2,b_3,b_4\rangle$ of order $16$. Set $H=B\rtimes A$. Consider the following action of a cyclic group $C=\langle c \rangle$ of order $2$ on $H$: $\: a^c=a^{-1}, b_1^c=b_1b_2, b_2^c=b_2,  b_3^c=b_1b_2b_4, b_4^c=b_1b_3$. Then $G=H\rtimes C$ has $cs(G)=\{1,5,20,32\}$, so $cs_{pp}(G)=\{2,5\}$ and $cs_{npp}(G)=\{20\}$, and effectively $G$ is a $\{2,5\}$-group. 

Another example for Proposition~\ref{propB1} is provided by the next group $G$. Take $A=\langle a_1\rangle \times \langle a_2\rangle$ a direct product of a cyclic group of order $9$ and a cyclic group of order $3$, and consider the action of a cyclic group $B=\langle b\rangle$ of order $3$ such that $a_1^b=a_1a_2$ and $a_2^b=a_2$. Set $H=A\rtimes B$. Consider the action of a cyclic group $C=\langle c \rangle$ of order $2$ on $H$ such that $b^c=b$, $a_1^c=a_1^8$ and $a_2^c=a_2^{-1}$. Then $G=H\rtimes C$ has $cs(G)=\{1,2,3,6,27\}$, so $cs_{pp}(G)=\{2,3\}$ and $cs_{npp}(G)=\{6\}$, and clearly $G$ is a $\{2,3\}$-group.

Finally, to illustrate an example regarding Proposition~\ref{propB2}, it is enough to consider a direct product $G=A\times B$ where $cs(A)=\{1,p,q\}$ and $cs(B)=\{1,r\}$, for pairwise different primes $p,q,r$, since in that situation $cs(G)=\{1,p,q,r,pr,qr\}$, so $cs_{npp}(G)=\{pr,qr\}$ and $cs_{pp}(G)=\{p,q,r\}$.
\end{example}

We are now ready to prove Theorem~\ref{theoA}.

\smallskip

\begin{proof}[Proof of Theorem \ref{theoA}]
Let us suppose that $|cs_c(G)|=n \geq 3$. Clearly $|cs_p(G)|\leq |cs_{pp}(G)|$, and $|cs_{npp}(G)|\leq |cs_c(G)|$ so $0 \leq |cs_{npp}(G)| \leq n$. If $|cs_{npp}(G)|=0$, then all class sizes of $G$ are prime powers, so applying \cite[Theorem 2]{CH} we get $|cs_{pp}(G)|\leq 2$. Hence $|cs_p(G)|\leq |cs_{pp}(G)|\leq 2 < n = |cs_c(G)|$, as wanted.

Suppose now $|cs_{npp}(G)|=1$. Then by Proposition \ref{propB1} we get $|cs_{pp}(G)|\leq 2$, and it follows $|cs_p(G)| \leq 2 < |cs_c(G)|$. Using similar arguments and Proposition \ref{propB2}, one can see that if $|cs_{npp}(G)|=2$, then $|cs_p(G)| \leq 3 \leq |cs_c(G)|$. Finally, when $|cs_{npp}(G)| \geq 3$, we can apply Theorem~\ref{theoremB} to deduce $|cs_p(G)|\leq |cs_{pp}(G)|\leq |cs_{npp}(G)|\leq |cs_c(G)|$, as desired.
\end{proof}

\bigskip

We already mentioned in the Introduction that the bound given in Theorem~\ref{theoA} may be attained with equality, for instance when $n=4$. Nevertheless, we show below that it may not be sharp for other values of $n$.

\medskip

\begin{proposition}
\label{propC}
If $G$ is a group with $|cs_c(G)|=5$, then $|cs_p(G)|<|cs_c(G)|$. 
\end{proposition}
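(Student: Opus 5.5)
We need to show that $|cs_c(G)|=5$ forces $|cs_p(G)|\le 4$. By Theorem~\ref{theoA} we already know $|cs_p(G)|\le 5$, so we argue by contradiction and assume there are five distinct prime class sizes $p_1<p_2<\dots<p_5$. Each of these is prime, so each is automatically the smallest power of $p_r$ in $cs(G)$. By the primary decomposition and Lemma~\ref{basic}~(b), we choose elements $x_r$ of prime power order ($t_r$-elements) with $|x_r^G|=p_r$. The plan is to produce at least six composite class sizes from products of these elements.

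The first step is to bound $|cs_{npp}(G)|$ from below. By Theorem~\ref{theoremB} and Propositions~\ref{propB1} and \ref{propB2}, five primes in $cs_{pp}(G)$ force $|cs_{npp}(G)|\ge 5$. Since $cs_{npp}(G)\subseteq cs_c(G)$, we get $cs_{npp}(G)=cs_c(G)$, so \emph{every} composite class size is divisible by at least two primes; in particular no $p_r^k$ with $k\ge 2$ occurs. We are then exactly in the boundary situation of the proof of Theorem~\ref{theoremB} with $n=5$ non-prime-power class sizes and five primes, rather than $n+1$. The second step is to rerun the four cases of that proof with $|R|=5=n$ and check that the same counting still gives more than $5$ non-prime-power sizes. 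In the key computations, $x_ix_j$ has size $p_ip_j$ whenever $t_i\ne t_j$ and no collision of the form $t_i=p_j$ occurs. Moreover, when $t_i=t_j=t$, the size $|(x_ix_j)^G|$ divides $p_ip_j$, and since $p_i,p_j$ are primes it equals $p_i$, $p_j$ or $p_ip_j$. By Lemma~\ref{lemma_kazarin} a prime value forces $t=\max\{p_i,p_j\}$, with both elements in $\rad{t}{G}$.

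The case analysis runs as follows.

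\emph{All $t_r$ distinct (Case I).} Each $t_r$ equals at most one $p_s$, so the "bad" pairs $\{r,s\}$ with $t_r=p_s$ or $t_s=p_r$ number at most $5$. All the other pairs give distinct sizes $p_rp_s$. There are $10$ pairs, so at least $5$ products of the form $p_rp_s$ arise. Equality forces a rigid "functional graph" structure, and I would then try to produce one more size, for example from a product of three commuting elements, giving $p_rp_sp_u$.

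\emph{All $t_r=t$ (Case II).} Lemma~\ref{lemma_kazarin} allows prime sizes only for pairs whose maximum is $p_{r}=t$, so only the four pairs containing the index $r$ with $p_r=t$. The remaining $6$ pairs give distinct products $p_ip_j$, which is already more than $5$.

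\emph{Mixed cases (III and IV).} Here I count as in the paper: cross pairs between different $t$-classes give $p_ip_j$ except for at most one collision per element, and within a class at most the pairs involving $p=t$ are prime. With $5$ elements this should yield at least $6$ products in every distribution of the $t_r$.

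The main obstacle is the extremal configurations where the count gives exactly $5$, typically in Case I with every $t_r$ hitting some $p_s$, or in the mixed case with a class of size $4$ plus one outsider. For these, my first attempt will be to exhibit a class size divisible by three primes. I would take three elements with pairwise coprime orders that can be conjugated into a common abelian-type centraliser; a fallback is Lemma~\ref{Ito} applied to a minimal centraliser of size $p_ip_j$. Lemma~\ref{basic} then yields the three-prime size $p_ip_jp_k$ as a sixth composite size. If that fails, I would use Lemma~\ref{lemmaJS} or Theorem~\ref{disconnected}, as in Proposition~\ref{propB2}, to show that the rigid configuration forces $|cs(G)|=3$ or $|\pi(G/\ze{G})|\le 2$, which is a contradiction.
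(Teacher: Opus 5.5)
There is a genuine gap. Your reduction to $cs_{npp}(G)=cs_c(G)$ is correct, and your Case~II count works. But the proposal never closes the configurations in which your count stops at exactly five products. For these you only list things you would try: a three-prime class size from three commuting elements, Lemma~\ref{Ito} on a centraliser of index $p_ip_j$ that you have not shown to be minimal, Lemma~\ref{lemmaJS}, and Theorem~\ref{disconnected}. None of them is carried out.

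The problem is not confined to Case~I. Your claim that the mixed cases ``should yield at least 6'' is unchecked, and with your rule of one lost pair per element it fails as a count. Take $t_1=t_2=p_3$, $t_3=p_4$, $t_4=p_5$, $t_5=p_1$. Your rule discards $\{1,3\},\{2,3\},\{3,4\},\{4,5\},\{1,5\}$. That leaves four cross products plus $p_1p_2$, so only five.

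The observation your route is missing is that making two elements commute needs only one condition. If $t_r\neq p_s$, then $\ce{G}{x_s}$ has prime index $p_s$, so it contains a Sylow $t_r$-subgroup. Hence some conjugate of $x_r$ commutes with $x_s$, and Lemma~\ref{basic} gives the class size $p_rp_s$ whenever $t_r\neq t_s$. So a cross pair is lost only if $t_r=p_s$ \emph{and} $t_s=p_r$. Such mutual collisions are pairwise disjoint, so in Case~I at most two pairs are lost and you get at least eight products. In the example above no pair is lost at all.

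The paper avoids element orders entirely. Since each $p_i$ is prime, Lemma~\ref{lemma_CC} applied to $p_1<p_i<p_j$ gives a class size greater than $p_i$ dividing $p_1p_i$, or greater than $p_j$ dividing $p_1p_j$. Such a class size must equal $p_1p_i$ or $p_1p_j$ respectively. Hence:
\begin{itemize}
\item at most one of $p_1p_2,\dots,p_1p_5$ is missing from $cs(G)$;
\item at most one of $p_2p_3,p_2p_4,p_2p_5$ is missing;
\item at least one of $p_3p_4,p_3p_5$ occurs.
\end{itemize}
This gives $3+2+1=6$ distinct composite class sizes, contradicting $|cs_c(G)|=5$ in a few lines.
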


\begin{proof}
By Theorem~\ref{theoA}, and arguing by contradiction, suppose that $|cs_p(G)|=5$, so $cs_p(G)=\{p_1,p_2,p_3,p_4,p_5\}$ with $p_i$ a prime number for each $1\leq i\leq 5$. We may certainly assume $p_i<p_j$ for all $i<j$. By Lemma~\ref{lemma_CC}, for every pair of indices $1<i<j$, at least one of $\{p_1p_i,p_1p_j\}$ lies in $cs_c(G)$. Consequently, it is easy to realise that there are at least three pairwise different indices $k\in\{2,3,4,5\}$ such that $p_1p_k\in cs_c(G)$. We can argue similarly with all possible indices $2<i<j$, and since at least one of $\{p_2p_i,p_2p_j\}$ lies in $cs_c(G)$ by Lemma~\ref{lemma_CC}, then we deduce that $p_2p_l\in cs_c(G)$ for at least two differerent indices $l\in\{3,4,5\}$; note that these last composite class sizes of $G$ are different from the previous ones $p_1p_k$ that we built, so now $|cs_c(G)|=5$. But as $p_3<p_4<p_5$ we can apply the same lemma and we get a new composite class size, which is impossible since it is clearly different from all the elements that already lie $cs_c(G)$. 
\end{proof}

\bigskip

In Example~\ref{ex} we mentioned that there are groups $G$ with $|cs_{pp}(G)|=3=|cs_{npp}(G)|$, so the bound presented in Theorem~\ref{theoremB} may hold with equality when $n=3$. However, we have not been able to determine whether this also occurs in Theorem~\ref{theoA} when $n=3$:

\medskip

\noindent\textbf{Question.} Does there exist a group $G$ with $|cs_p(G)|=3=|cs_c(G)|$?

\medskip

We strongly believe that the previous question has a negative answer. We finish this note by providing some evidence for this problem.

\bigskip

\begin{proposition}
There is no finite group $G$ with $cs(G)=\{1,p,q,r,pq,pr,qr\}$, where $p,q,r$ are pairwise different primes.
\end{proposition}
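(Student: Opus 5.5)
We argue by contradiction, reusing the machinery of Theorem~\ref{theoremB} and Proposition~\ref{propB2}. Suppose $cs(G)=\{1,p,q,r,pq,pr,qr\}$; then no class size is divisible by $pqr$. By the primary decomposition and Lemma~\ref{basic}~(b) we may pick elements $x,y,z$ of prime power order, say a $t_1$-, $t_2$- and $t_3$-element, with $|x^G|=p$, $|y^G|=q$, $|z^G|=r$. The basic tool is the following observation. If $u$ is an $s$-element and $s\nmid |w^G|$, then a conjugate of $u$ lies in $\ce{G}{w}$. Hence two elements of coprime orders and coprime class sizes can be made to commute, and Lemma~\ref{basic} then forces their product to have class size the product of the two sizes. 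So the aim is to produce an element of class size $p$ (resp.\ $q$, $r$) that commutes with an element of coprime order whose class size is the complementary product $qr$ (resp.\ $pr$, $pq$). That product would have class size $pqr$, the desired contradiction.

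First I would pin down the primes $t_i$, following the case analysis of Proposition~\ref{propB2}. If $t_1\notin\{q,r\}$ and $t_1\neq t_2,t_3$, then $x$ commutes up to conjugation with $y$ and $z$. Since $t_1\neq p$ is forced in many subcases, I would look at an element $w$ of prime power order $s$-part with $|w^G|=qr$. Such an element exists: take $g$ with $|g^G|=qr$. Its primary parts have class sizes dividing $qr$, and their centralisers intersect in $\ce{G}{g}$. Either some part has size exactly $qr$, or two parts have sizes $q$ and $r$, with commuting parts of coprime order. Then $x$ (if $t_1\neq s$ and $p\nmid qr$) can be conjugated to commute with $w$, giving $pqr$. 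The residual configurations are those where each $t_i$ equals one of the class-size primes in a cyclic way, or where all $x,y,z$ are $t$-elements for a common prime $t$.

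In the common-prime case, Lemma~\ref{basic}~(a) gives that $|(xy)^G|$ divides $pq$. If it is a prime power, Lemma~\ref{lemma_kazarin} says $\max\{p,q\}$ is a power of $t$ and $\langle x,y\rangle\leqslant\rad{t}{G}$; at most one such pair can occur, as in Case~II of Theorem~\ref{theoremB}. So, say, $t=r$ with $r>p,q$, and $x,y,z\in\rad{r}{G}$. As in the proofs of Theorem~\ref{theoremB} and Proposition~\ref{propB2}, I would then show that no $r'$-element of prime power order has prime power class size. Otherwise it commutes with $z$, or with one of $x,y$, and creates $pqr$. So every non-central $r'$-element of prime power order has class size in $\{pq,pr,qr\}$, and one of size $pq$ commutes with $z$ up to conjugation, giving $pqr$. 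The cyclic case $t_1=q$, $t_2=r$, $t_3=p$ (and similar) I would handle with Lemma~\ref{Ito}. Take a non-central element $g$ of prime power order with composite class size in the minimal-centraliser position (size $qr$, say). Then $\ce{G}{g}=S\times A$ with $A$ abelian. Suitable conjugates of $x$ and of an element of class size $qr$ land in $A$ and commute, and Lemma~\ref{lemmaJS} rules out the case where only one composite size is realised by $t'$-elements.

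I expect the main obstacle to be this last cyclic configuration. There the orders and class sizes are interlocked so that no pair of $x,y,z$ commutes automatically, and one must genuinely find an element of size $qr$ (or another complementary product) whose order is coprime to that of the element of size $p$. My first attempt there is the Itô-type argument with Lemma~\ref{Ito} and Lemma~\ref{lemmaJS}, exactly as at the end of Proposition~\ref{propB2}.
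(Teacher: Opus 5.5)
Your proposal is a strategy rather than a proof, and it breaks off exactly where the difficulty lies, so there is a genuine gap. Its only engine is the Sylow trick (an $s$-element can be conjugated into $\ce{G}{w}$ when $s\nmid|w^G|$), followed by Lemma~\ref{basic}.

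\textbf{The cyclic configuration.} Take $x$ a $q$-element with $|x^G|=p$, $y$ an $r$-element with $|y^G|=q$, and $z$ a $p$-element with $|z^G|=r$. The order of each prime-size element is then divisible by a prime of the complementary product $qr$, $pr$, $pq$ respectively. So, applied to $x,y,z$, the trick only produces pairwise products of sizes $pq,pr,qr$, all of which are allowed. It never produces three mutually commuting factors of coprime orders. The appeal to Lemma~\ref{Ito} does not fill this hole. If $|g^G|=qr$ and $\ce{G}{g}=S\times A$, you can place conjugates of $p$-elements in $\ce{G}{g}$, since $p\nmid qr$. Nothing places the $q$-element $x$ there, and ``suitable conjugates of $x$ land in $A$'' is precisely the statement that would need proof.

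\textbf{The common-prime case.} The contradiction in Case~II of Theorem~\ref{theoremB} came from counting non-prime-power sizes. Here $pq,pr,qr$ are all legitimate class sizes, so that counting is unavailable.
\begin{itemize}
\item $|(xy)^G|,|(xz)^G|,|(yz)^G|$ may all be composite. Then Lemma~\ref{lemma_kazarin} gives no information on $t$.
\item When prime powers do occur, two pairs can (e.g.\ $t=p>q,r$), not ``at most one''. The lemma only puts the two elements of each such pair in $\rad{t}{G}$.
\item Your step that an $r'$-element $h$ of prime class size ``creates $pqr$'' fails. Commuting $h$ with one of $x,y,z$ yields at most a product of two of $p,q,r$, which is an allowed size. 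A third coprime factor is unavailable because $x,y,z$ are all $r$-elements.
\item Excluding size $pq$ for $r'$-elements (which your argument does achieve) leaves sizes in $\{1,pr,qr\}$. Lemma~\ref{lemmaJS} requires a single non-trivial size, so it does not apply.
\end{itemize}

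\textbf{How the paper avoids this.} The paper does no local analysis. Since $\{p,q,r\}\subseteq cs(G)\subseteq\{1,p\}\times\{1,q\}\times\{1,r\}$, \cite[Theorem 1.2]{CT} shows that, up to abelian direct factors, either
\begin{itemize}
\item $G=P\times H$ with $cs(P)=\{1,p\}$ and $cs(H)=\{1,q,r\}$, or
\item $G=P\times Q\times R$ with $cs(P)=\{1,p\}$, $cs(Q)=\{1,q\}$, $cs(R)=\{1,r\}$.
\end{itemize}
These give $\{1,p,q,r,pq,pr\}$ or a set containing $pqr$, never the prescribed one. Without a global structural input of this kind, or a genuine argument for the interlocked configurations, your sketch does not prove the proposition.
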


\begin{proof}
We utilise the next notation: if $\pi_1,\pi_2$ are non-empty sets of positive integers, then we set $\pi_1\times \pi_2 = \{n_1\cdot n_2 \; | \; n_1\in \pi_1,\: n_2\in \pi_2\}$. Observe now that, if there exists such a group $G$, then $\{p,q,r\}\subseteq cs(G) \subseteq \{1,p\}\times\{1,q\}\times\{1,r\}$. Thus we can apply \cite[Theorem 1.2]{CT}, so $G$ is the direct product of $\overline{\mathcal{D}}$-groups of pairwise coprime order; that is, there are two possibilities (up to abelian direct factors):
\begin{itemize}
\setlength{\itemsep}{-1mm}
\item $G=P\times H$ where $cs(P)=\{1,p\}$ and $cs(H)=\{1,q,r\}$, or
\item $G = P\times Q\times R$ with $cs(P)=\{1,p\}$, $cs(Q)=\{1,	q\}$ and $cs(R)=\{1,r\}$.
\end{itemize}
Since we do not get in any case the original set $cs(G)$, then such $G$ cannot not exist.
\end{proof}

\bigskip

In particular, if $|cs_p(G)|=3=|cs_c(G)|$ for some finite group $G$, and $cs_p(G)=\{p,q,r\}$, then $cs_c(G)\nsubseteq \{pq,pr,qr,pqr\}$.


\bigskip
\bigskip

\noindent \textbf{Declarations.} The authors declare no conflict of interest.


\end{document}